\def\D{\mathcal{D}}
\def\C{\mathcal{C}}
\def\e{\epsilon}
\def\d{\delta}
\def\a0{($\e$,\,0)}
\def\Prob{\mathbb{P}}
\def\R{\mathbb{R}}
\def\N{\mathbb{N}}
\newtheorem{lemma}{Lemma}
\newtheorem{theorem}{Theorem}
\newtheorem{corollary}{Corollary}
\newtheorem{definition}{Definition}
\newtheorem{example}{Example}
\DeclareMathOperator*{\diag}{diag}
\DeclareMathOperator{\rank}{rank}
\DeclareMathOperator{\trace}{tr}
\DeclareMathOperator{\conv}{conv}
\DeclareMathOperator{\ext}{ex}
\DeclareMathOperator{\spn}{span}
\begin{document}

\title{Extreme Points of the Local Differential Privacy Polytope}
\author[naoise_doug]{Naoise Holohan}
\author[naoise_doug]{Douglas J. Leith}
\author[ollie]{Oliver Mason\corref{cor1}}

\address[naoise_doug]{School of Computer Science and Statistics, Trinity College Dublin, Ireland}
\address[ollie]{Dept. of Mathematics and Statistics/Hamilton Institute, Maynooth University, Co. Kildare, Ireland \& Lero, the Irish Software Research Centre}
\cortext[cor1]{Corresponding author. Tel.: +353 (0)1 7083672; fax: +353
5(0)1 7083913; email: oliver.mason@nuim.ie}

\begin{abstract}
We study the convex polytope of $n\times n$ stochastic matrices that define locally $\epsilon$-differentially private mechanisms.  We first present invariance properties of the polytope and results reducing the number of constraints needed to define it.  Our main results concern the extreme points of the polytope.  In particular, we completely characterise these for matrices with 1, 2 or $n$ non-zero columns.  

\end{abstract}

\begin{keyword}
Data Privacy \sep Stochastic Matrices \sep Matrix Polyopes \sep Differential Privacy. \MSC[2010]{68R01, 68R05, 60C05}
\end{keyword}

\maketitle

\section{Introduction}

Data privacy has been of interest to researchers in computer science \cite{AW89}, statistics, cryptography \cite{DH79} and law \cite{budnitz1997privacy} for decades.  The recent emergence of `Big Data', while offering significant potential benefits to business and society, poses very real risks to personal privacy; this naturally has led to increased interest in questions pertaining to data privacy.  The concept of \emph{Differential Privacy}, introduced by C. Dwork in 2006 \cite{Dwo06}, has emerged as a popular theoretical paradigm in privacy research within the computer science community and has been applied to various different types of data and queries \cite{Dwo08}.

We are interested in the geometry of matrix polytopes arising in the study of differential privacy for categorical or finite-valued datasets.  More formally, we consider databases $\mathbf{d} \in D^N$ where the set $D$ is finite and can, without loss of generality, be taken to be $\{1, \ldots, n\}$.  Each entry in $\mathbf{d}$, $d_i$, corresponds to data contributed by an individual; the base set $D$ describes all the values that data entries can take.  

The problem we consider is motivated by the construction of differentially private sanitisations, where we are interested in releasing a private, sanitised version of a database $\mathbf{d}$.  A \emph{sanitisation} is defined by a set of random variables $X_{\mathbf{d}}$ taking values in $D^N$ for every $\mathbf{d} \in D$.  Loosely speaking, $X_{\mathbf{d}}$ describes a noisy version of the original $\mathbf{d}$ designed to protect the privacy of individual data contributors.  

The differential privacy model specifies two privacy parameters, $\e\ge0$ and $0\le\d\le1$. For any two databases $\mathbf{d},\mathbf{d}'\in D^N$ that differ in one row only, ($\e$,$\d$)-differential privacy requires
\begin{equation}
\Prob(X_{\mathbf{d}}\in A)\le e^\e\Prob(X_{\mathbf{d}'} \in A)+\d,
\end{equation}
for all $A\subseteq D^N$.

In essence, differential privacy ensures that answers to queries on a database cannot change greatly when one person's information in a database is altered.

The above definition considers global privacy with the mechanism defined on a complete database.  Global mechanisms can readily be constructed using locally private mechansism, where  subjects perturb/sanitise their own data locally before providing it to a central database upon which queries are answered \cite{DJW13}.  The concept of local privacy first appeared over 50 years ago as a way to eliminate bias in surveying \cite{War65} and is known in other contexts as \emph{input perturbation} or \emph{randomised response} \cite{War65, GKS08}.  A rigorous mathematical framework has been developed which guarantees global differential privacy when local differential privacy methods are applied \cite{HLM15}.



We refer to local mechanisms as 1-dimensional mechanisms, as they take a single row of a database as an input, and output another (perturbed/noisy) row.  In our context, a 1-dimensional mechanism is specified by giving an appropriate probability mass function $p_{i}$ for every $i \in D = \{1, \ldots, n\}$.  More compactly, a 1-dimensional mechanism is defined by a stochastic matrix $A \in \mathbb{R}^{n \times n} $ where $a_{ij}$ denotes the probability of outputting $j$ when the input, or real data, is $i$.  The requirement for local differential privacy is then given by:
\begin{equation}\label{eq:locdp}
a_{ik} \le e^\e a_{jk} + \d
\end{equation}
for all $i, j, k$.  These constraints, taken together with the stochastic and nonnegativity constraints, define the local differential privacy polytope.  We shall consider the simplified case of strict differential privacy (which is what was originally introduced by Dwork) where $\d = 0$ here.  

In practice, we are interested in finding a mechanism (i.e. a matrix in this polytope) which is optimal for some utility function.  Understanding the geometry of the polytope guides the design of such mechanisms.  For instance, if the utility function happens to be linear, then the optimal mechanism occurs at an extreme point of the polytope.  The search for optimal mechanisms in differential privacy has been studied by a number of authors \cite{MT07,NST12,LHR10}.  Local differential privacy has been studied recently in the paper \cite{DJW13}, while extremal local differential privacy mechanisms were considered in \cite{KOV14}.  Of course, polytopes of stochastic matrices and doubly stochastic matrices have been widely studied in the past \cite{BEK05, BL91,CR99,Bru85}.  An alternative study on geometrical aspects of differential privacy can be found in \cite{HT10}.





The basic layout of the paper is as follows.  In Section~\ref{sc:prelim} we introduce preliminary definitions of polytopes, extreme points, the concept of differential privacy and the polytope with which we will be working. In Section~\ref{sc:elem} we look at some elementary results for extreme points of this polytope, and in Section~\ref{sc:main} we present our main results. In Section~\ref{sc:special} we examine a number of special cases for extreme points, and finish with some concluding remarks in Section~\ref{sc:conc}.

\section{Notation and Background}\label{sc:prelim}

To begin, let us introduce the major notation and standard definitions to be used in our results.  For a matrix $A \in \mathbb{R}^{n \times n}$ and $1 \leq i \leq n$, we will use $A^{(i)}$ to denote the $i$th column of $A$.  $A^T$ denotes the usual matrix transpose.  We denote by $\mathbf{1}$ the (column) vector of all ones where the dimension will typically be clear from context.  We denote by $e_i$, $1\leq i \leq n$, the $i$th standard basis vector of $\mathbb{R}^n$.  

\subsection{Polyhedra}

In this paper, we adopt the following definitions for polyhedra and polytopes

\begin{definition}\label{df:polyt}
Let $\langle\cdot,\cdot\rangle:V\times V\to \mathbb{R}$ be an inner product on a real vector space $V$, and let $\left\{c^{(1)},\dots,c^{(q+l)}\right\}\subseteq V$ and $b\in \mathbb{R}^{q+l}$ be given. A convex polyhedron $\mathcal{P}\subseteq V$ is defined as:
\begin{equation}\label{eq:polyhedron}
\mathcal{P}=\left\{v\in V:\begin{array}{rl}\langle c^{(i)},v\rangle= b_i, & \forall\;1\le i\le q,\\
\langle c^{(q+i)},v\rangle\le b_{q+i}, & \forall\;1\le i\le l.\end{array}\right\}.
\end{equation}
\end{definition}

An inequality constraint is said to be \emph{tight} or \emph{active} on a point $v$ if $\langle c^{(q+i)},v\rangle = b_{q+i}$. 
\begin{definition}
A convex polytope in a vector space $V$ is the convex hull of a finite collection of points in $V$.
\begin{equation}\label{eq:polytope}
\mathcal{P}=\conv(v_1, \dots, v_k),
\end{equation}
where $v_i\in V$ for all $i$.
\end{definition}

It is well known that all polytopes are polyhedra, but only bounded polyhedra are polytopes.  

An \emph{extreme point} of a polyhedron cannot be written as the convex combination of any other points in the polyhedron.

\begin{definition}[Extreme point]
Let $\mathcal{P}\subseteq\R^n$ be a convex polyhedron. A point $v\in\mathcal{P}$ is an extreme point of $\mathcal{P}$ if $w,z\in\mathcal{P}$, $\frac{1}{2}(w+z)=v$, implies $w=z=v$.
\end{definition}

We denote by $\ext(\mathcal{P})$ the set of all extreme points of a polyhedron $\mathcal{P}$. 

Our primary interest is in characterising the extreme points of the local differential privacy polyhedron.  The following theorem from convex geometry shall prove useful in this regard \cite{barvinok2002course}.

\begin{theorem}\label{th:barvinok}
Let $\mathcal{P}\subseteq V$ be a polyhedron, and consider a point $v\in\mathcal{P}$. Denote by the set $I_v\subseteq\{1,\dots,l\}$ the indices of the inequality constraints that are tight on $v$ (i.e.\ $\langle c^{(q+i)},v\rangle =b_{q+i}$ for all $i\in I_v$ and $\langle c^{(q+i)},v\rangle<b_{q+i}$ for all $i\in\{1,\dots,l\}\setminus I_v$). Then $v\in\ext(\mathcal{P})$ if and only if
$$\spn\left(\left\{c^{(1)},\dots,c^{(q)}\right\}\cup\left\{c^{(q+i)}: i\in I_v\right\}\right)=V.$$
\end{theorem}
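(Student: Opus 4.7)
The plan is to prove the two implications separately, both by fairly direct arguments built around the inner product structure.

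For the sufficiency direction, I would assume the spanning condition holds and suppose $v = \tfrac{1}{2}(w+z)$ for some $w,z \in \mathcal{P}$, aiming to conclude $w = z = v$. Setting $u := w - v$ (so that $u = v - z$ as well), it suffices to show $u$ is orthogonal to every vector in the claimed spanning set, since those vectors span $V$. For each equality index $i \in \{1,\dots,q\}$, both $\langle c^{(i)}, w\rangle$ and $\langle c^{(i)}, z\rangle$ equal $b_i$, giving $\langle c^{(i)}, u\rangle = 0$. For each tight index $i \in I_v$, the two values $\langle c^{(q+i)}, w\rangle$ and $\langle c^{(q+i)}, z\rangle$ are each bounded above by $b_{q+i}$ while their average equals $\langle c^{(q+i)}, v\rangle = b_{q+i}$; both must therefore equal $b_{q+i}$, yielding $\langle c^{(q+i)}, u\rangle = 0$. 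Hence $u = 0$ and $w = z = v$.

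For the necessity direction, I would argue by contrapositive. If the span is a proper subspace of $V$, then there exists a nonzero $u \in V$ with $\langle c^{(i)}, u\rangle = 0$ for every $i \in \{1,\dots,q\}$ and every $q+i$ with $i \in I_v$. Define the perturbations $w := v + \lambda u$ and $z := v - \lambda u$ for $\lambda > 0$. Then $w \ne z$ and $v = \tfrac{1}{2}(w+z)$, so it remains only to verify $w, z \in \mathcal{P}$ for a suitable $\lambda$. The equality and tight inequality constraints are preserved exactly by the orthogonality of $u$. For each non-tight index $j \notin I_v$, the slack $s_j := b_{q+j} - \langle c^{(q+j)}, v\rangle$ is strictly positive; since there are only finitely many such $j$, choosing $\lambda$ smaller than $s_j / |\langle c^{(q+j)}, u\rangle|$ for all $j$ with $\langle c^{(q+j)}, u\rangle \ne 0$ preserves feasibility. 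This contradicts the extremality of $v$.

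The only subtle point is the uniform choice of $\lambda$ in the necessity argument, which hinges on the finiteness of the constraint list; the remainder is elementary linear algebra combined with the averaging observation that tight inequalities behave like equalities on convex combinations.
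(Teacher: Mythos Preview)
Your proof is correct and is essentially the standard argument for this classical characterisation of extreme points. Note, however, that the paper does not actually prove this theorem: it is stated as a known result from convex geometry and cited to Barvinok's textbook \cite{barvinok2002course}, so there is no ``paper's own proof'' to compare against. Your argument is precisely the one found in such references, and both directions are handled cleanly; the only minor point worth making explicit is that in the sufficiency direction you are implicitly using that the inner product is non-degenerate (so that orthogonality to a spanning set forces $u=0$), which is of course automatic for a genuine inner product.
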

Essentially, this result tells us that $v$ is an extreme point of $P$ if and only if there are $n$ linearly independent constraints tight on $v$ where $n$ is the dimension of $V$.
\subsection{Differential Privacy}\label{sc:dp}

As in the Introduction, we take the set $D=\{1, \dots, n\}$ to be the domain of the rows of our database (i.e.\ each subject contributes a value of $D$ to the database).  For local differential privacy to be satisfied, we require:
$$\Prob(X_{i}\in I)\le e^\e \Prob(X_{j}\in I)+\d,$$
for all $i,j\in\{1, \dots, n\}$ and for all $I\subset D$.

For the purpose of this paper, we only consider the case of strict or non-relaxed differential privacy, where $\d=0$. In this case, the requirement simplifies to
$$\Prob(X_{i}=k)\le e^\e \Prob(X_{j}=k),$$
for all $i,j,k\in\{1, \dots, n\}$.

If we let $A\in\R^{n\times n}$ be given by,
$$a_{ij}= \Prob(X_{i}=j)$$
then $A$ defines a valid $\e$-differential privacy mechanism if and only if the following conditions hold:
\begin{subequations}\label{eq:dp0}
\begin{align}
\sum_{j} a_{ij}&=1, & &1\le i\le n,\label{eq:dp1}\\
a_{ij}&\ge 0, & &1\le i,j\le n,\label{eq:dp2}\\
a_{ik}&\le e^\e a_{jk}, & &1\le i,j,k\le n.\label{eq:dp3}
\end{align}
\end{subequations}


We now define the $\e$-differential privacy polytope, comprised of all matrices satisfying the above constraints.

\begin{definition}[Differential Privacy Polytope]\label{df:dp}
Fix $n\in\N$ and $\e\ge0$. The $\e$-differential privacy polytope, $\D\subset\R^{n\times n}$, is defined as follows:
\begin{align}\label{eq:sm}
\D=\left\{A\in\R^{n\times n}:\begin{array}{ll}\sum_j a_{ij}=1, & \forall\;1\le i\le n,\\
a_{ij}\ge 0, & \forall\;1\le i,j\le n,\\
a_{ij}\le e^\e a_{kj}, & \forall\;1\le i,j,k\le n.\end{array}\right\}.
\end{align}

The non-negativity and stochastic constraints ensure $\D$ is bounded. Therefore it is a polytope.

\end{definition}

\textbf{Note:} As the constraint $a_{ij}\le e^\e a_{kj}$ must hold for all $i,j,k$, we require $e^{-\e}a_{kj}\le a_{ij}\le e^\e a_{kj}$ for each $i,j,k$. Equivalently, $\max_i a_{ij}\le e^\e\min_i a_{ij}$ for all $j$.

\textbf{Remark:} If $\e=0$, then for $A$ to be in $\D$, we require that $a_{ij}=a_{kj}$ for all $i,j,k$.

Using the Hilbert Schmidt inner product 
$$\langle X,Y\rangle=\trace(X^TY),$$
together with the matrices $e_ie_j^T$, $e_i\mathbf{1}^T$, $e_ie_j^T = e^\e e_ke_j^T$, it is not a difficult exercise to represent the constraints defining $\D$ in the form given in Definition \ref{df:polyt}.

\section{Preliminary results}\label{sc:elem}



In this section, we present several preliminary results on the structure of the set $\D$ and its extreme points.  We first note that the nonnegativity constraint in the definition of $\D$ is redundant in the case where $\e > 0$.  

\begin{lemma}
Fix $\e>0$. Let $v\in\R^n$ satisfy $v_i\le e^\e v_j$ for all $i,j$. Then $v\ge 0$.
\end{lemma}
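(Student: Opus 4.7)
The plan is to exploit the fact that the hypothesis requires $v_i \le e^\epsilon v_j$ for \emph{all} pairs $(i,j) \in \{1,\dots,n\}^2$, including the diagonal pairs where $i = j$. So the main idea is just to specialise the constraint to $j = i$.

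More concretely, fix any index $i$. Taking $j = i$ in the hypothesis gives $v_i \le e^\epsilon v_i$, which rearranges to $(e^\epsilon - 1)\, v_i \ge 0$. Since $\epsilon > 0$, we have $e^\epsilon - 1 > 0$, and dividing through yields $v_i \ge 0$. As $i$ was arbitrary, this gives $v \ge 0$.

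I do not anticipate any obstacle here: the entire argument is a one-line specialisation of the hypothesis. It is worth noting explicitly, however, that the argument breaks down precisely at $\epsilon = 0$, since then $e^\epsilon - 1 = 0$ and the inequality $(e^\epsilon - 1) v_i \ge 0$ carries no information about the sign of $v_i$. This is consistent with the fact that, in the definition of $\mathcal{D}$, the nonnegativity constraints \eqref{eq:dp2} are genuinely needed when $\epsilon = 0$ but become redundant for $\epsilon > 0$, which is the content of the lemma as applied column-by-column to a matrix in $\mathcal{D}$.
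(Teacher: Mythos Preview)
Your argument is correct and in fact slightly cleaner than the paper's. The paper proceeds by contradiction: assuming $v_i<0$, it uses both directions of the constraint to sandwich $e^{-\epsilon}v_i\le v_j\le e^{\epsilon}v_i$, collapses this to $e^{-\epsilon}v_i\le e^{\epsilon}v_i$, and then divides by the negative $v_i$ to obtain $e^{-\epsilon}\ge e^{\epsilon}$, contradicting $\epsilon>0$. Your route---specialising directly to $j=i$ and factoring out $(e^{\epsilon}-1)$---reaches the same conclusion without the detour through contradiction or the two-sided bound; it isolates exactly the single instance of the hypothesis that is needed. Both arguments rely on the same fact, namely that $e^{\epsilon}>1$ when $\epsilon>0$, and both correctly degenerate at $\epsilon=0$, as you note.
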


\begin{proof}
Let $v_i<0$ for some $i$. Then, for each $j$, we have:
\begin{align*}
& & e^{-\e} v_i&\le v_j\le e^\e v_i\\
&\Rightarrow &e^{-\e}v_i&\le e^\e v_i\\
&\Rightarrow &e^{-\e}&\ge e^\e\\
&\Rightarrow & \e&\le0
\end{align*}

By hypothesis $\e>0$. Hence, we must have $v_i\ge 0$ for each $i$.
\end{proof}

Our next lemma notes that if the differential privacy constraint is tight on two elements in a column, that those two elements must be the minimum and maximum entries of that column.

\begin{lemma}\label{lem:colmaxmin}
Let $v\in\R^n$ be a vector with $v_i\le e^\e v_j$ for all $1\le i,j\le n$. Suppose there exists at least one pair $i,j$ where $v_i=e^\e v_j$. Then $\min_k v_k=v_j$ and $\max_k v_k=v_i$.
\end{lemma}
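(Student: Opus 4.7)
The plan is to observe that the hypothesis $v_i \le e^\e v_j$ for all pairs $(i,j)$ is a very strong global constraint, and that combining it with the single assumed equality $v_i = e^\e v_j$ immediately pins down the maximum and minimum entries of $v$ by two judicious choices of indices.

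First I would fix the distinguished pair $(i,j)$ satisfying $v_i = e^\e v_j$, and then pick an arbitrary index $k \in \{1,\dots,n\}$. To show that $v_i$ is the maximum, I would apply the hypothesis in the form $v_k \le e^\e v_j$ (i.e.\ play the role of the ``first'' index with $k$ and the ``second'' with our fixed $j$). Substituting $e^\e v_j = v_i$ on the right-hand side yields $v_k \le v_i$ for every $k$, so $v_i = \max_k v_k$.

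For the minimum, I would apply the hypothesis in the dual form $v_i \le e^\e v_k$ (keeping $i$ fixed and letting the ``second'' index range over $k$). Substituting $v_i = e^\e v_j$ on the left-hand side gives $e^\e v_j \le e^\e v_k$, and since $e^\e > 0$ this divides out to $v_j \le v_k$ for every $k$, so $v_j = \min_k v_k$.

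There is no real obstacle here: the argument is a direct two-line application of the defining inequalities, chosen so that the single equality $v_i = e^\e v_j$ collapses the $e^\e$ factor on whichever side is needed. The only thing to be careful about is index bookkeeping, namely to apply the constraint once with $k$ taking the role of $i$ and once with $k$ taking the role of $j$, which is the cleanest way to avoid confusing the asymmetric pair $(i,j)$ with the running index.
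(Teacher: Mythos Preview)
Your proof is correct and is essentially the same as the paper's: both pick an arbitrary index and apply the defining inequality with $j$ (respectively $i$) held fixed, then use the equality $v_i = e^\e v_j$ to eliminate the $e^\e$ factor. The only cosmetic difference is that the paper phrases each half as a one-line contradiction (assume some $v_l > v_i$ or $v_l < v_j$ and derive a violation of the constraint), whereas you argue directly; the underlying inequality manipulation is identical.
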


\begin{proof}
Suppose there exists $v_l$ such that $v_l>v_i$. Then, $v_l>e^\e v_j$, contradicting the differential privacy constraints. Similarly, if $v_l<v_j$, then $e^\e v_l<v_i$. The result follows.
\end{proof}

Several of our results will relate the extreme points $A$ of $\D$ to the non-zero columns in $A$.  With this in mind, we formally define 
$$\gamma(A)=\{i\in\{1,\dots,n\}:A^{(i)} \neq 0\}.$$
So that $\gamma(A)$ consists of the indices of the non-zero columns of $A$ and $1\le|\gamma(A)|\le n$ gives the number of non-zero columns in $A$. 


Our next result concerns the rank of the extreme points of $\D$; first we note the simple observation that $\rank(A)\le|\gamma(A)|$ for all $A$.

\begin{theorem}\label{th:rank}
Let $A\in\ext(\D)$. Then
$$\rank(A)=|\gamma(A)|.$$
\end{theorem}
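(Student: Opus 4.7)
The plan is to show the reverse inequality $\rank(A)\ge|\gamma(A)|$, i.e.\ that the nonzero columns of any extreme point are linearly independent. Combined with the trivial observation $\rank(A)\le|\gamma(A)|$ mentioned in the excerpt, this yields the theorem.

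I would proceed by contraposition. Suppose $A\in\D$ has a linear dependence among its nonzero columns: there exist scalars $\alpha_i$, $i\in\gamma(A)$, not all zero, with
$$\sum_{i\in\gamma(A)}\alpha_i A^{(i)}=0.$$
The natural perturbation to try is the matrix $B\in\R^{n\times n}$ whose $i$th column is $\alpha_i A^{(i)}$ for $i\in\gamma(A)$ and zero otherwise. Since not every $\alpha_i$ with $A^{(i)}\neq0$ can be zero (otherwise the dependence is trivial), $B\neq0$. The strategy is then to show $A+tB,\,A-tB\in\D$ for all sufficiently small $t>0$, which, together with $A=\tfrac12\bigl((A+tB)+(A-tB)\bigr)$, contradicts $A\in\ext(\D)$.

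The verifications are straightforward and column-by-column. First, for each row index $k$,
$$\sum_j B_{kj}=\sum_{i\in\gamma(A)}\alpha_i A_{ki}=\Bigl(\sum_{i\in\gamma(A)}\alpha_i A^{(i)}\Bigr)_k=0,$$
so $A\pm tB$ still satisfies the row-sum constraint \eqref{eq:dp1}. Second, the $i$th column of $A+tB$ is $(1+t\alpha_i)A^{(i)}$ for $i\in\gamma(A)$ and $0$ otherwise; choosing $|t|<1/\max_i|\alpha_i|$ makes $1+t\alpha_i>0$, so every column of $A\pm tB$ is a nonnegative scalar multiple of a column of $A$. Hence \eqref{eq:dp2} holds. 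Third, the DP constraint \eqref{eq:dp3} is preserved within each column because scaling by a positive constant preserves the inequalities $a_{ji}\le e^\e a_{ki}$, and trivially across columns of $A$ whose column in $B$ is zero.

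The only step requiring a little thought is the structural choice of $B$: one must use $B^{(i)}=\alpha_i A^{(i)}$ rather than arbitrary columns satisfying the dependence, because the DP and nonnegativity constraints couple the entries of each column in a way that only positive rescalings of the existing column $A^{(i)}$ are guaranteed to respect. Once this is recognised, the row-sum cancellation is exactly the dependence relation and the rest is immediate.
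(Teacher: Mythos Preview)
Your proof is correct and is essentially identical to the paper's own argument: the paper likewise assumes a nontrivial dependence $\sum_i \eta_i A^{(i)}=0$ among the nonzero columns, sets $B=A\diag(\eta)$ (which is exactly your column-wise scaling $B^{(i)}=\eta_i A^{(i)}$), and uses $A\pm\Delta B$ with $\Delta<1/\max_i|\eta_i|$ to contradict extremality. The verifications of stochasticity, nonnegativity, and the differential-privacy inequalities are the same as yours.
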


	\begin{proof}
	Suppose $A\in\ext(\D)$. As noted before, $\rank(A)\le|\gamma(A)|$. If $A$ has only one non-zero column, then clearly $\rank(A)=1=|\gamma(A)|$.
	
	Let $|\gamma(A)|>1$ and suppose $\rank(A)<|\gamma(A)|$. Then there exists $\eta\in\R^n$, $\eta\ne0$ and $\eta_i=0$ for all $i\notin\gamma(A)$ (i.e. whenever $A^{(i)}=0$), such that $\sum_i\eta_i A^{(i)}=0$.
	
	Let $B=A\diag(\eta)$. By construction, $B\mathbf{1}=0$.
	
	Consider $C = A-\Delta B$, $D = A+\Delta B$, where $0<\Delta<\frac{1}{\max_i |\eta_i|}$. Then,
	\begin{enumerate}
	\item $C$ and $D$ are stochastic, as $A$ is stochastic and $B \mathbf{1} = 0$;
	\item since $a_{ij}\le e^\e a_{kj}$ and $(1\pm\Delta\eta_j)>0$ for all $i,j,k$, it follows that $c_{ij}\le e^\e c_{kj}$, $d_{ij}\le e^\e d_{kj}$; and
	\item $C, D\ge0$.
	\end{enumerate}
	Hence, $C$ and $D$ are in $\D$ and $C \neq D$ as $B \neq 0$.
	
	However, $\frac{1}{2}\left(C+D \right)=A$, so $A\notin\ext(\D)$, a contradiction. Therefore, for every $A\in\ext(\D)$, $\rank(A)=|\gamma(A)|$.
	\end{proof}

We shall often make implicit use of the following simple corollary to the above result; essentially it states that for an extreme point $A$ with at least 2 non-zero columns, none of these columns can have all their entries equal.

\begin{corollary}
\label{cor:constantcols} Let $A \in \ext(\D)$ satisfy $|\gamma(A)| \geq 2$.  Then there is no $i \in \gamma(A)$, $k \in \mathbb{R}$ with $A^{(i)} = k \mathbf{1}$.  
\end{corollary}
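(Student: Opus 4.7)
The plan is to derive a contradiction with Theorem \ref{th:rank} by showing that if some non-zero column of $A$ is a scalar multiple of $\mathbf{1}$ and $|\gamma(A)| \geq 2$, then the non-zero columns of $A$ must be linearly dependent, forcing $\rank(A) < |\gamma(A)|$.

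The starting observation is that since $A$ is stochastic, the row-sum condition can be rewritten column-wise as $\sum_{j=1}^n A^{(j)} = \mathbf{1}$, and since only columns indexed by $\gamma(A)$ contribute,
$$\sum_{j\in\gamma(A)} A^{(j)} = \mathbf{1}.$$
Now suppose $i \in \gamma(A)$ and $A^{(i)} = k\mathbf{1}$. Since $A^{(i)} \neq 0$ and $A$ is non-negative with row sums equal to 1, we have $0 < k \le 1$. Substituting $\mathbf{1} = \tfrac{1}{k}A^{(i)}$ into the identity above and rearranging gives
$$(1-k)\,A^{(i)} \;=\; k \sum_{j\in\gamma(A)\setminus\{i\}} A^{(j)}.$$

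I would then split into two cases. If $k = 1$, the right-hand side must be zero; since each $A^{(j)}$ is non-negative, this forces $A^{(j)} = 0$ for all $j \in \gamma(A)\setminus\{i\}$, contradicting $|\gamma(A)| \geq 2$. If $0 < k < 1$, then $A^{(i)}$ is expressed as a non-trivial linear combination of the other non-zero columns, yielding a linear dependence among $\{A^{(j)} : j \in \gamma(A)\}$ and hence $\rank(A) < |\gamma(A)|$; this contradicts Theorem \ref{th:rank}.

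There is no genuine obstacle here; the only minor subtlety is confirming at the outset that $k$ lies in $(0,1]$ so that the case analysis is exhaustive, and handling the $k=1$ case cleanly via non-negativity rather than via a rank argument (since in that case the rearranged identity is degenerate). Once those two points are made, the contradiction with Theorem \ref{th:rank} is immediate in both cases.
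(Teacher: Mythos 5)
Your proof is correct and follows essentially the same route as the paper: use the stochastic identity $\sum_{j\in\gamma(A)}A^{(j)}=\mathbf{1}$ to produce a non-trivial linear dependence among the non-zero columns and contradict Theorem~\ref{th:rank}. The only difference is cosmetic: you spell out via non-negativity why $k=1$ is impossible, a point the paper dismisses in a single clause.
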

\begin{proof}
Suppose that there is some $k \in \mathbb{R}$, $i_0 \in \gamma(A)$ such that $A^{(i_0)} = k \mathbf{1}$.  Clearly, $k \neq 0$ as $i_0 \in \gamma(A)$ and $k \neq 1$ as $|\gamma(A)| \geq 2$.  As $A$ is stochastic, 
$$\sum_{i \in \gamma(A)} A^{(i)} = \mathbf{1}$$
which implies that
$$(1-\frac{1}{k}) A^{(i_0)} + \sum_{i \in \gamma(A), i \neq i_0} A^{(i)} = 0.$$
This implies that $\rank(A) < |\gamma(A)|$, contradicting Theorem \ref{th:rank}.  
\end{proof}

It is clear from the definition that $\D$ is closed under row/column permutations. Our next result notes that this same invariance property also holds for extreme points.

\begin{theorem}\label{th:perm}
Let $A\in\D$ and let $P_1, P_2\in\{0,1\}^{n\times n}$ be permutation matrices. Then $P_1AP_2\in\D$. Furthermore, $A\in\ext(\D)$ if and only if $P_1AP_2\in\ext(\D)$.
\end{theorem}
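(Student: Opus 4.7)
The plan is to prove both statements by directly checking the three types of constraints in Definition~\ref{df:dp} and then exploiting the fact that left/right multiplication by permutation matrices is a linear bijection on $\R^{n \times n}$ that maps $\D$ into itself.

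First, I would verify the easier claim that $P_1AP_2 \in \D$ whenever $A \in \D$. Set $B = P_1 A P_2$, so $b_{ij} = a_{\sigma(i)\,\tau(j)}$ for permutations $\sigma, \tau$ of $\{1,\dots,n\}$ associated with $P_1, P_2$. Nonnegativity of $B$ is immediate from nonnegativity of $A$. Each row sum of $B$ equals $\sum_j a_{\sigma(i)\,\tau(j)} = \sum_{j'} a_{\sigma(i)\,j'} = 1$, since $\tau$ is a bijection. Finally, the differential privacy constraint $b_{ij} \le e^\e b_{kj}$ becomes $a_{\sigma(i)\,\tau(j)} \le e^\e a_{\sigma(k)\,\tau(j)}$, which holds because the constraint \eqref{eq:dp3} is quantified over all triples of row/column indices. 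So $B \in \D$.

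Next, for the equivalence on extreme points, the key observation is that the map $\Phi : A \mapsto P_1 A P_2$ is a linear bijection on $\R^{n \times n}$ with inverse $\Phi^{-1}(B) = P_1^T B P_2^T$, and by the first part both $\Phi$ and $\Phi^{-1}$ send $\D$ into $\D$; hence $\Phi$ restricts to a bijection $\D \to \D$. Suppose $A \in \ext(\D)$ and write $P_1 A P_2 = \tfrac{1}{2}(W+Z)$ with $W, Z \in \D$. Applying $\Phi^{-1}$ and using linearity yields $A = \tfrac{1}{2}(P_1^T W P_2^T + P_1^T Z P_2^T)$, where both summands lie in $\D$. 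Extremality of $A$ forces $P_1^T W P_2^T = P_1^T Z P_2^T = A$, and multiplying back through by $P_1, P_2$ gives $W = Z = P_1 A P_2$, so $P_1 A P_2 \in \ext(\D)$. The converse direction follows by applying the same argument to $\Phi^{-1}$, or simply by noting that $A = P_1^T (P_1 A P_2) P_2^T$ so the roles of $A$ and $P_1 A P_2$ are symmetric under conjugation by $(P_1^T, P_2^T)$.

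There is no serious obstacle here; the only point requiring a small amount of care is making sure the DP constraints really are preserved under both row and column permutations, which works because the inequalities \eqref{eq:dp3} are imposed for every index triple rather than for a specific labelling.
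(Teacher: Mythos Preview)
Your proof is correct and follows essentially the same approach as the paper: verify that the three families of constraints in Definition~\ref{df:dp} are preserved under row and column permutations, then use the linear bijection $A\mapsto P_1AP_2$ (with inverse given by $P_1^T,P_2^T$) to transport any convex decomposition of $P_1AP_2$ back to one of $A$. The only difference is cosmetic: you spell out the verification of the constraints via the index bijections $\sigma,\tau$, whereas the paper dismisses this step as clear from the definition.
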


\begin{proof}
From Definition~\ref{df:dp}, it clearly follows that $P_1AP_2\in\D$ if $A\in\D$, since permuting a matrix only changes the order of rows and columns, but elements in the same row/column will remain in a common row/column.

Now, let $A\in\ext(\D)$ and suppose that $P_1AP_2\notin\ext(\D)$ for some permutation matrices $P_1$ and $P_2$. Then, there exist $B\neq C\in\D$ such that,
$$P_1AP_2=\frac{1}{2}(B+C).$$
However, since $P^{-1}=P^T$ for any permutation matrix $P$, we have
$$A=\frac{1}{2}(P_1^TBP_2^T+P_1^TCP_2^T),$$
where $P_1^TBP_2^T\neq P_1^TCP_2^T$ since $B\neq C$. This is a contradiction since $A\in\ext(\D)$, hence $P_1AP_2\in\ext(\D)$ also. Thus,
$$A\in\ext(\D)\Rightarrow P_1AP_2\in\ext(\D),$$
and
$$ P_1AP_2\in\ext(\D)\Rightarrow P_1^TP_1AP_2P_2^T=A\in\ext(\D),$$
hence $A\in\ext(\D)$ if and only if $P_1 AP_2\in\ext(\D)$ for any permutation matrices $P_1$ and $P_2$.
\end{proof}

\subsection{Tight constraints}

We now examine the implications of Theorem~\ref{th:barvinok} for the extreme points of $\D$.  We first note a simple fact concerning the number of linearly independent differential privacy constraints that can be tight on an element of $\D$.  

In the next result, we use $\C^{dp}_j$ to denote the set of all tight differential privacy constraints acting on the $j$th column of a matrix $A$.  Formally, given $A$, this consists of all constraints such that $a_{ij} - e^\e a_{kj} = 0$ where $1 \le i, k \le n$.  

\begin{theorem}\label{th:linindconst}
Let $A\in\D$ be given.  Then, $\dim(\spn(\C^{dp}_j))=n$ if and only if $a_{ij}=0$ for each $i\in\{1,\dots,n\}$.
\end{theorem}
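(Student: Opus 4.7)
The plan is to handle the two directions of the biconditional separately; the \emph{if} direction is almost immediate, while the \emph{only if} direction requires some case analysis using Lemma~\ref{lem:colmaxmin}.

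For the ($\Leftarrow$) direction, suppose every entry $a_{ij}$ in column $j$ equals $0$. Then every DP constraint $a_{ij} - e^\e a_{kj} = 0$ on column $j$ holds trivially, so all of them are tight. In particular, taking $i = k$ gives the tight constraints $(1 - e^\e)\, a_{ij} \le 0$ for every $i$; the corresponding constraint vector in $\R^n$ is a nonzero multiple of $e_i$, since $\e > 0$ makes $1 - e^\e \ne 0$. Hence $\C^{dp}_j$ already contains (scalar multiples of) every standard basis vector, and $\dim \spn(\C^{dp}_j) = n$.

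For the ($\Rightarrow$) direction, I would argue by contrapositive: assume column $j$ has at least one nonzero entry and show $\dim \spn(\C^{dp}_j) \le n - 1$. Let $M = \max_l a_{lj}$ and $m = \min_l a_{lj}$; then $M > 0$, and the DP inequality $M \le e^\e m$ forces $m > 0$ as well. It follows that no ``diagonal'' ($i = k$) constraint is tight, since those would require $a_{ij} = 0$. By Lemma~\ref{lem:colmaxmin}, a non-diagonal tight constraint $a_{ij} = e^\e a_{kj}$ requires $i \in I_M := \{l : a_{lj} = M\}$, $k \in I_m := \{l : a_{lj} = m\}$, and $M = e^\e m$. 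If $M < e^\e m$ strictly, then there are no tight DP constraints on column $j$ at all, the dimension is $0$, and the result is immediate.

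In the remaining main case $M = e^\e m$, the tight constraint vectors are exactly $v_{ik} := e_i - e^\e e_k$ for $i \in I_M$ and $k \in I_m$. Here I would prove $\spn\{v_{ik}\} \subsetneq \R^n$ by producing an explicit nonzero annihilating linear functional $\phi \in (\R^n)^*$: set $\phi(e_l) = e^\e$ for $l \in I_M$, $\phi(e_l) = 1$ for $l \in I_m$, and $\phi(e_l) = 0$ for indices outside $I_M \cup I_m$. A direct computation yields $\phi(v_{ik}) = e^\e - e^\e \cdot 1 = 0$ for every admissible $(i,k)$, while $\phi$ is plainly nonzero (both $I_M$ and $I_m$ are nonempty, so $\phi(e_l) = e^\e > 0$ for any $l \in I_M$). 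This forces $\dim \spn(\C^{dp}_j) \le n - 1$, contradicting the hypothesis. The main obstacle is pinpointing the annihilator $\phi$ in the final case; the remainder of the argument is essentially bookkeeping combining Lemma~\ref{lem:colmaxmin} with the observation that a nonzero column forces $m > 0$.
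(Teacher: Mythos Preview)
Your argument is correct (under the implicit assumption $\e>0$), but for the $\Rightarrow$ direction the paper takes a much shorter route that bypasses Lemma~\ref{lem:colmaxmin} entirely. The paper simply observes that a DP constraint on column $j$ being tight, $a_{ij}-e^\e a_{kj}=0$, is literally the statement that the constraint vector $e_i-e^\e e_k$ is orthogonal to $A^{(j)}$. Thus if $\spn(\C^{dp}_j)=\R^n$ then $A^{(j)}$ is orthogonal to all of $\R^n$ and must vanish; contrapositively, when $A^{(j)}\neq 0$ the column $A^{(j)}$ itself is a ready-made annihilating functional, with no case split on whether $M=e^\e m$ and no need to identify $I_M$, $I_m$. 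Your $\phi$ is in effect a normalised and truncated version of $A^{(j)}$, so the arguments are close in spirit, but the paper's is a one-liner. For the $\Leftarrow$ direction your use of the diagonal constraints $(1-e^\e)e_i$ is, if anything, tidier than the paper's explicit cyclic matrix $T$.
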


\begin{proof}
If we make the obvious identification of the $j$th column of $A$ with a column vector, $A^{(j)}$ in $\mathbb{R}^n$, then each constraint in $\C^{dp}_j$ can be identified with a vector of the form $(0, \ldots, 1, 0 , \ldots, -e^\e, 0, \ldots, 0)^T$ where the $1$ occurs in the $i$th position and $e^\e$ occurs in the $k$th position.  If $\dim(\spn(\C^{dp}_j))=n$, there are $n$ linearly independent vectors $v_1, \ldots, v_n$ such that $v_i^T A^{(j)} = 0$ for $1 \leq i \leq n$ so it follows trivially that $A^{(j)} = 0$. 

For the converse, it is enough to note that $A^{(j)} = 0$ implies that every differential privacy constraint acting on the $j$th column is tight and that there are $n$ linearly independent such constraints.  To see this consider the matrix $T$ with: $t_{ii} = 1$ for $1 \leq i \leq n$; $t_{i+1, i} = -e^\e$ for $1\leq i < n$; $t_{1n} =-e^\e$; $t_{jk} = 0$ otherwise.  It can readily be verified that $T$ is non-singular.  
\end{proof}

\textbf{Remark:} A direct consequence of Theorem~\ref{th:linindconst} is that $\dim(\spn(\C^{dp}_j))\le n-1$ for any $j\in\gamma(A)$. 

Our later characterisations of the extreme points of $\D$ shall rely on the following concept of \emph{loose entries}.

\begin{definition}[Loose entries of a matrix]
Given $A \in \D$, define
$$\lambda(A)=\Bigl\{(i,j):a_{ij}\notin\bigl\{e^\e\min_k a_{kj}, e^{-\e}\max_k a_{kj}\bigr\}\Bigr\}.$$

For a matrix $A\in\D$, we say the entry $a_{ij}$ is \textbf{loose} if $(i,j)\in\lambda(A)$.
\end{definition}

It follows from Lemma \ref{lem:colmaxmin} that for any $(i,j)$ there exists a $k$ such that $a_{ij}=e^{\pm\e}a_{kj}$ if and only if $(i,j)\notin\lambda(A)$.  

\begin{example}
Let $\e=ln(2)$ and
$$A=\frac{1}{7}\left(\begin{array}{ccc} 4 & 1 & 2 \\ 3 & 2 & 2 \\ 2 & 1 & 4\end{array}\right).$$
Then $\lambda(A)=\{(2,1)\}$, since $3\notin\{4,2\}$.
\end{example}

Our next result bounds the number of loose entries of an extreme point in terms of the number of non-zero columns.

\begin{theorem}\label{th:maxmmin}
Let $A\in\ext(\D)$ with $|\gamma(A)|\geq 2$.  Then,
$$|\lambda(A)|\le n-|\gamma(A)|.$$
\end{theorem}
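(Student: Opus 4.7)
The plan is to apply Theorem~\ref{th:barvinok}: $A\in\ext(\D)$ requires the tight constraints at $A$ to span $\R^{n\times n}$, which has dimension $n^2$. The count will be performed columnwise, but first depends on establishing the auxiliary claim that every $j\in\gamma(A)$ carries at least one tight DP constraint.

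I would prove the auxiliary claim by contradiction. Suppose some $j_0\in\gamma(A)$ has no tight DP constraint in its column. Since $|\gamma(A)|\ge 2$, pick $j_1\in\gamma(A)\setminus\{j_0\}$, and by the remark after Theorem~\ref{th:linindconst} choose a nonzero $v\in\R^n$ orthogonal to the span $W_{j_1}$ of tight DP constraint vectors in column $j_1$. Define $\Delta\in\R^{n\times n}$ by $\Delta^{(j_0)}=v$, $\Delta^{(j_1)}=-v$, and zero columns elsewhere. This $\Delta$ has zero row sums; every tight DP in column $j_1$ is preserved by the choice of $v$; no DP is tight in column $j_0$; and all entries of $A^{(j_0)}$ and $A^{(j_1)}$ are strictly positive (any column in $\gamma(A)$ has all entries positive). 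Hence for sufficiently small $\alpha>0$, $A\pm\alpha\Delta\in\D$, contradicting extremality.

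Turning to the count, for $j\in\gamma(A)$ Lemma~\ref{lem:colmaxmin} says that tight DP constraints involve only the max-index set $S_j$ and min-index set $T_j$, which are disjoint and nonempty. The non-loose entries of column $j$ are exactly those in $S_j\cup T_j$, giving $\ell_j:=n-|S_j|-|T_j|$ loose entries, and a short basis computation (using $\{e_i-e_{i_0}:i\in S_j\setminus\{i_0\}\}\cup\{e_k-e_{k_0}:k\in T_j\setminus\{k_0\}\}\cup\{e_{i_0}-e^\e e_{k_0}\}$) shows $\dim W_j=|S_j|+|T_j|-1=n-1-\ell_j$. For $j\notin\gamma(A)$, Theorem~\ref{th:linindconst} gives $\dim W_j=n$.

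The stochasticity equalities span an $n$-dimensional subspace $U\subset\R^{n\times n}$, while the $W_j$ embed into their respective columns and are thus supported on disjoint columns; a brief check shows $U\cap\bigoplus_jW_j$ has dimension equal to $\dim\bigcap_jW_j$. Hence the total span of tight constraints has dimension
$$n+\sum_j\dim W_j-\dim{\textstyle\bigcap_j}W_j \;=\; n^2+n-|\gamma(A)|-|\lambda(A)|-\dim{\textstyle\bigcap_j}W_j,$$
and Theorem~\ref{th:barvinok} forces this to equal $n^2$. This yields $|\lambda(A)|=n-|\gamma(A)|-\dim\bigcap_jW_j\le n-|\gamma(A)|$, as required. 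The main obstacle is the auxiliary claim; without it, a column in $\gamma(A)$ with no tight DP would contribute $0$ to $\dim W_j$ while contributing $n$ loose entries to $|\lambda(A)|$, breaking the balance in the dimension count.
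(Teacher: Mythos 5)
Your proof is correct, and it rests on the same foundation as the paper's argument -- Theorem~\ref{th:barvinok} together with a column-by-column accounting of tight constraints -- but the bookkeeping is genuinely different. The paper works with inequalities: the $n$ stochastic constraints force at least $n^2-n$ linearly independent tight privacy constraints, the zero columns supply $(n-|\gamma(A)|)n$ of these via Theorem~\ref{th:linindconst}, and each non-zero column $j$ is argued (using only non-constancy of its entries) to supply at most $n-l_j-1$; summing and rearranging gives the bound. You instead compute exact dimensions: $\dim W_j=n-1-l_j$ for $j\in\gamma(A)$ via an explicit basis, $\dim W_j=n$ for zero columns, and the identity $\dim\bigl(U+\bigoplus_j W_j\bigr)=n+\sum_j\dim W_j-\dim\bigcap_j W_j$, which yields the sharper conclusion $|\lambda(A)|=n-|\gamma(A)|-\dim\bigcap_j W_j$. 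Your auxiliary perturbation lemma -- that every $j\in\gamma(A)$ must carry at least one tight privacy constraint -- is a genuine addition: the paper's per-column estimate is justified only by Corollary~\ref{cor:constantcols} and silently passes over the case of a non-zero column with no tight constraint (where all $n$ of its entries would be loose and the per-column bound fails); you handle this either by your explicit exchange argument or, as your closing remark observes, it is also forced by the dimension count itself. Two small points to tidy: the tight nonnegativity constraints should be mentioned and dismissed (an entry of a non-zero column cannot vanish, and on zero columns they already lie in the embedded $W_j=\R^n$, so they add nothing to the span, which is what legitimises equating your expression with $n^2$); and the disjointness of $S_j$ and $T_j$ is exactly Corollary~\ref{cor:constantcols} and deserves an explicit citation.
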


	\begin{proof}
	Let  $A\in\ext(\D)$ and consider the following sets of constraints active on $A$.  We define $$\C^{dp} = \bigcup_{j \in \gamma(A)} \C^{dp}_j$$ to be the set of tight differential privacy constraints acting on the columns in $\gamma(A)$.  Note the following readily verifiable facts:
	\begin{itemize}
	\item[(i)] for $j \notin \gamma(A)$, every differential privacy constraint acting on column $j$ is tight;
	\item[(ii)] the $n$ stochastic constraints are tight;
	\item[(iii)] as $|\gamma(A)| \geq 2$, no non-zero column of $A$ is of the form $k \mathbf{1}$ where $k \in \mathbb{R}$. 
	\end{itemize}
	It follows from (ii) and Theorem \ref{th:barvinok} that the number of tight, linearly independent differential privacy constraints on $A$ must be $n^2 -n$.  Furthermore, Theorem \ref{th:linindconst} implies that there are $n$ linearly independent differential privacy constraints active on each of the $n - |\gamma(A)|$ zero columns of $A$.  It is not difficult to see that constraints acting on different columns must be linearly independent and hence there are a total of $(n-|\gamma(A)|)n$ linearly independent tight differentially private constraints arising from the zero columns of $A$.  Putting all of this together, we see that there must be $$n^2 - n - (n-|\gamma(A)|)n = n| \gamma(A)| - n$$ tight differential privacy constraints acting on the non-zero columns of $A$.  Formally:
\begin{equation}
\label{eq:OMcol1}
|\C^{dp}| \geq n| \gamma(A)| - n.
\end{equation}

From point (iii) above there are no non-zero columns in which all entries are constant; it follows that for each $j\in\gamma(A)$,
	$$|\{i:(i, j)\notin \lambda(A) \}|\ge|\C^{dp}_j|+1.$$
If we let $l_j$ denote the number of loose entries in column $j$, the previous inequality can be rewritten as 
$$|\C^{dp}_j| \le n- l_j -1.$$
Combining this with \eqref{eq:OMcol1} we see that
\begin{eqnarray*}
n| \gamma(A)| - n &\leq& \sum_{j \in \gamma(A)} |\C^{dp}_j| \\
&\le& \sum_{j \in \gamma(A)} n- l_j -1 \\
&=& n |\gamma(A)| - |\lambda(A)| - |\gamma(A)|.
\end{eqnarray*} 
A simple rearrangement now shows that
$$|\lambda(A)| \le n - |\gamma(A)|$$ as claimed. 
\end{proof} 

\textbf{Note:} When $|\gamma(A)|=1$, $|\lambda(A)|=n$.

To conclude this sub-section, we take a look at the following result for later use, which states that at most one loose entry can appear in any row of an extreme point.

\begin{lemma}\label{lm:samerow}
Let $A\in\ext(\D)$. No row of $A$ has more than one loose entry (i.e. there exist no two distinct pairs $(i_1,j_1),(i_1,j_2)\in \lambda(A)$ with $j_1\neq j_2$).
\end{lemma}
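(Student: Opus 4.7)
The plan is to prove the contrapositive by an explicit convex decomposition: if $A \in \D$ has two loose entries in the same row, then $A$ cannot be extreme. Suppose $(i,j_1),(i,j_2) \in \lambda(A)$ with $j_1 \neq j_2$. I will exhibit $B, C \in \D$ with $B \neq C$ and $A = \tfrac{1}{2}(B+C)$, contradicting $A \in \ext(\D)$.

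The key observation, implicit in the definition of $\lambda$, is that a loose entry $a_{ij}$ satisfies the strict two-sided inequality
$$e^{-\e}\max_k a_{kj} \;<\; a_{ij} \;<\; e^\e \min_k a_{kj},$$
and (when $\e > 0$) is itself strictly positive, since $a_{ij} = 0$ would force $a_{ij} = e^\e \min_k a_{kj} = 0$. Each loose entry therefore has two-sided wiggle room: perturbing it up or down by a sufficiently small amount preserves non-negativity and all DP constraints within its own column.

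The construction is
$$B = A + \Delta\bigl(e_i e_{j_1}^T - e_i e_{j_2}^T\bigr), \qquad C = A - \Delta\bigl(e_i e_{j_1}^T - e_i e_{j_2}^T\bigr),$$
for $\Delta > 0$ sufficiently small. Because $\Delta$ is added to one entry of row $i$ and subtracted from another entry of the same row, with no other row affected, both $B$ and $C$ remain stochastic. The DP constraints in columns $j_1$ and $j_2$ are preserved by the wiggle-room argument applied independently to the two loose entries, and all other columns are untouched. Hence $B, C \in \D$, and since $B \neq C$ yet $A = \tfrac{1}{2}(B+C)$, we contradict $A \in \ext(\D)$.

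I do not anticipate any real obstacle. The only step that requires even minor care is choosing $\Delta$ small enough to make all four affected DP inequalities simultaneously slack; this follows immediately by taking a minimum over finitely many positive quantities supplied by the strict inequalities above. The conceptual content is simply that two loose entries in the same row convert the available column-wise wiggle room into a row-preserving perturbation that respects the stochastic constraint.
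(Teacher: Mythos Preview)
Your proposal is correct and follows essentially the same route as the paper: both perturb the two loose entries in opposite directions along the row, taking $B=A+\Delta(e_ie_{j_1}^T-e_ie_{j_2}^T)$ and $C=A-\Delta(e_ie_{j_1}^T-e_ie_{j_2}^T)$, and observe that stochasticity and the DP constraints survive for small $\Delta>0$. If anything, your handling of $\Delta$ via the strict inequalities $e^{-\e}\max_k a_{kj}<a_{ij}<e^{\e}\min_k a_{kj}$ is slightly cleaner than the paper's explicit formula, which can vanish when a loose entry happens to coincide with a column extremum.
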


\begin{proof}
Let $A\in\ext(\D)$, and assume without loss of generality that $(1,1), (1,2)\in \lambda(A)$. Let
\begin{equation*}
\Delta=\min\left\{\max_i a_{i1}-a_{11}, a_{11}-\min_i a_{i1}, \max_i a_{i2}-a_{12}, a_{12}-\min_i a_{i2}\right\}.
\end{equation*}
Hence, $A\pm\Delta(E_{11}- E_{12})\in\D$.

However, $A=\frac{1}{2}((A+\Delta E_{11}-\Delta E_{12})+(A-\Delta E_{11}+\Delta E_{12}))$, hence, $A\notin\ext(\D)$, a contradiction and so the result follows.
\end{proof}

Finally, for this section we present a number of other results that will add further insight to the behaviour and structure of $\D$ and its extreme points.  The next piece of notation will prove useful later.  

For $A\in\D$, we define the vector $m'\in\R^n$ where $m'_j=\frac{1}{\min_i a_{ij}}$ for any $j\in\gamma(A)$ and $m'_j=0$ otherwise.  We then denote by $\tilde{A}$ the matrix given by:
\begin{equation}\label{eq:tildeA}
\tilde{A} =A\diag(m').
\end{equation}

Then, for any $A\in\D$, $\tilde{a}_{ij}\in[1,e^\e]$ for any $j\in\gamma(A)$, and $\tilde{a}_{ij}=0$ otherwise.

Hence, $$\tilde{A}\diag_{1\le j\le n}\left(\min_i a_{ij}\right)=A.$$

\textbf{Note:} $\gamma(A)=\gamma(\tilde{A})$ and $\lambda(A)=\lambda(\tilde{A})$.

We now show that for any extreme point $A$, $\tilde{A}$ cannot have a row with equal non-zero values.

\begin{lemma}\label{lm:nonconstrow}
Let $A\in\ext(\D)$ with $|\gamma(A)|>1$. Then for each row $i$, there exist two non-zero columns $j, k\in\gamma(A)$ such that $\tilde{a}_{ij} \neq \tilde{a}_{ik}$.
\end{lemma}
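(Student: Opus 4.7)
The plan is to argue by contradiction: suppose there is a row $i_0$ such that $\tilde{a}_{i_0 j} = c$ for every $j \in \gamma(A)$, for some $c \in [1, e^\e]$. I would split the argument according to where $c$ sits in $[1, e^\e]$, since the obstruction to $A$ being extreme is of a different nature in the interior and at the endpoints.

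For the interior case $c \in (1, e^\e)$, note that $a_{i_0 j} = c \mu_j$ where $\mu_j := \min_l a_{lj}$. Since $\max_l a_{lj} \le e^\e \mu_j$, the value $c\mu_j$ is strictly greater than $\mu_j \ge e^{-\e}\max_l a_{lj}$ and strictly less than $e^\e \mu_j$. Hence $a_{i_0 j} \notin \{e^\e \mu_j,\, e^{-\e}\max_l a_{lj}\}$, so every entry $(i_0, j)$ with $j \in \gamma(A)$ is loose. Because $|\gamma(A)| \ge 2$, this places at least two loose entries in row $i_0$, directly contradicting Lemma \ref{lm:samerow}.

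The harder case is $c \in \{1, e^\e\}$. I would treat $c = 1$ in detail, with $c = e^\e$ following by a fully symmetric argument using column maxima. When $c = 1$, we have $a_{i_0 j} = \mu_j$ for every $j \in \gamma(A)$, so $a_{i_0 j} \le a_{lj}$ holds for every row $l$ and every column $j$ (trivially when $j \notin \gamma(A)$, where both sides vanish). Since all rows of $A$ sum to $1$,
$$\sum_{j=1}^n (a_{lj} - a_{i_0 j}) = 0,$$
and this is a sum of non-negative terms, forcing $a_{lj} = a_{i_0 j}$ for every $l$ and $j$. Thus every row of $A$ coincides with row $i_0$, so $\rank(A) = 1$, contradicting Theorem \ref{th:rank} which gives $\rank(A) = |\gamma(A)| \ge 2$. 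The symmetric case $c = e^\e$ uses $a_{i_0 j} = e^\e \mu_j \ge \max_l a_{lj}$ to make row $i_0$ a pointwise upper bound on all other rows, after which the same row-sum collapse applies.

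The main step is recognising that at the endpoints $c \in \{1, e^\e\}$ row $i_0$ becomes a pointwise extremum of every column, so that the equal-row-sum constraint immediately collapses the row inequalities into equalities and forces all rows of $A$ to coincide; this rank-based collapse is the crux of the argument, since Lemma \ref{lm:samerow} alone only suffices for the interior regime.
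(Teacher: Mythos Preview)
Your proof is correct. The interior case $c\in(1,e^\e)$ is handled exactly as in the paper, via Lemma~\ref{lm:samerow}. For the endpoint cases $c\in\{1,e^\e\}$, however, you take a somewhat different route: you observe that row $i_0$ is a pointwise lower (resp.\ upper) bound on every row, and then use the common row-sum $1$ to collapse all rows onto row $i_0$, yielding $\rank(A)=1$ and a contradiction with Theorem~\ref{th:rank}. The paper instead invokes Theorem~\ref{th:maxmmin} to guarantee that some other row $i^*$ has an entry with $\tilde a_{i^*j}=e^\e$ (resp.\ $1$) in at least one column, and then derives a direct violation of stochasticity for row $i^*$. Both arguments hinge on the same underlying observation (row $i_0$ is pointwise extremal and all row sums agree), but yours is slightly more economical in that it needs only Theorem~\ref{th:rank} rather than Theorem~\ref{th:maxmmin}; the paper's version, on the other hand, exhibits the contradiction directly at the level of a single row sum without passing through a rank argument.
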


\begin{proof}
We prove this by contradiction. Firstly, suppose there exists a row $i$ such that $\tilde{a}_{ij}=\tilde{a}_{ik}$ for all $j,k\in\gamma(A)$. By Lemma~\ref{lm:samerow}, each row cannot have more than one loose element, therefore either $\tilde{a}_{ij}=1$ or $\tilde{a}_{ij}=e^\e$.

Let $m\in\R^n$ be defined by $m_j=\min_i a_{ij}$. Then $A=\tilde{A}\diag(m)$.

Suppose $\tilde{a}_{ij}=1$, hence $\sum_{k\in\gamma(A)}m_k=1$. By Theorem~\ref{th:maxmmin}, each column $j$ has at least one pair $(i,k)$ such that $a_{ij}=e^\e a_{kj}$, hence there exists a row $i^*$ such that $\tilde{a}_{i^*j}=e^\e$. However, $\tilde{a}_{i^*k} \ge1$ for every $k\in\gamma(A)$, so $\sum_{k\in\gamma(A)} \tilde{a}_{i^*k}m_k>1$, contradicting the stochasticity of $A$.

A similar argument holds for $\tilde{a}_{ij}=e^\e$. The result follows.
\end{proof}

%
%
%

\section{Extreme points for fixed values of $|\gamma(A)|$}\label{sc:main}

In this section, we characterise extreme points with a specified number of non-zero columns. We note that extreme points with one and two non-zero columns are limited to a specific form, while Section~\ref{sc:mainmainres} deals with extreme points with any number of non-zero columns.

\subsection{Extreme points with one column non-zero}

The first case to consider is that of a single non-zero column in the matrix. Due to the stochastic constraints, there are only $n$ such matrices, and as Theorem~\ref{th:thm1} below states, each one of these matrices is an extreme point.

\begin{theorem}[$|\gamma(A)|=1$]\label{th:thm1}
Let $E_i\in\R^{n\times n}$ be given by $E_i = \mathbf{1}e_i^T$ for $1 \leq i \leq n$ and define the set $\tilde{\D}'$ as:
$$\tilde{\D}'=\left\{E_1, \dots, E_n\right\}.$$

Then $\tilde{\D}'\subseteq\ext(\D)$.

Furthermore, $A\in\ext(\D)$, $|\gamma(A)|=1$ implies that $A\in\tilde{\D}'$.
\end{theorem}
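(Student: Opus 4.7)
My plan is to prove the two parts of the theorem separately, starting with the structural characterisation (the ``furthermore'' statement) since it essentially fixes the form of $E_i$, and then verifying that each $E_i$ is indeed extreme.

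For the structural direction, suppose $A \in \ext(\D)$ with $|\gamma(A)| = 1$, so there is a unique index $i$ with $A^{(i)} \ne 0$ and $A^{(j)} = 0$ for every $j \ne i$. I would fix an arbitrary row $k$ and apply the stochasticity constraint $\sum_j a_{kj} = 1$. Since $a_{kj} = 0$ whenever $j \ne i$, this forces $a_{ki} = 1$ for every $k$, so $A^{(i)} = \mathbf{1}$ and hence $A = \mathbf{1}e_i^T = E_i$. No convexity argument is needed here; the single-non-zero-column hypothesis plus stochasticity is rigid enough on its own.

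For the inclusion $\tilde{\D}' \subseteq \ext(\D)$, I would first check $E_i \in \D$: every row has exactly one non-zero entry (equal to $1$) in position $i$, so the stochastic and non-negativity constraints are immediate, and the differential privacy constraints reduce to $1 \le e^\e \cdot 1$ in column $i$ and $0 \le e^\e \cdot 0$ in the remaining columns. To show $E_i$ is extreme, I would take any decomposition $E_i = \tfrac{1}{2}(B + C)$ with $B, C \in \D$ and argue column by column. For each $j \ne i$, we have $b_{kj} + c_{kj} = 0$ for every $k$; together with $B, C \ge 0$ (which follows from non-negativity of the defining constraints, or from the earlier lemma showing that the differential privacy constraints alone force non-negativity when $\e > 0$), this forces $b_{kj} = c_{kj} = 0$. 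Then stochasticity of $B$ and $C$ applied to each row yields $b_{ki} = c_{ki} = 1$, giving $B = C = E_i$.

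The only mild subtlety is handling the case $\e = 0$, where the non-negativity constraint must be invoked directly rather than deduced from the differential privacy inequalities; but since non-negativity is part of the definition of $\D$, this is straightforward. Alternatively, one could appeal to Theorem~\ref{th:barvinok}: at $E_i$ the $n$ stochastic constraints are tight and linearly independent, and the $n(n-1)$ non-negativity (or equivalently, tight privacy) constraints on the zero columns contribute the remaining $n^2 - n$ linearly independent vectors, spanning $\R^{n \times n}$. I expect the whole argument to be short, with no real obstacle beyond being careful about which direction of the non-negativity implication applies for each value of $\e$.
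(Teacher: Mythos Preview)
Your proposal is correct and follows essentially the same approach as the paper: use nonnegativity of $B,C\in\D$ to zero out all columns other than the $i$th in any convex decomposition of $E_i$, then invoke stochasticity to force $B=C=E_i$; and for the converse, observe that stochasticity alone pins down any $A\in\D$ with a single non-zero column as some $E_i$. Your additional remarks about verifying $E_i\in\D$, the $\e=0$ case, and the alternative via Theorem~\ref{th:barvinok} are sound but not needed for the core argument.
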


\begin{proof}
Suppose $E_i = \frac{1}{2}(B+C)$ for $B, C$ in $\D$.  As $B$, $C$ are both nonnegative, it follows immediately that all columns of $B$ and $C$ apart from the $i$th column are zero.  $B$ and $C$ are also both stochastic which immediately implies that $B = C = \mathbf{1}e_i^T$.  

Note that if $A\in\D$ with $|\gamma(A)|=1$, then $A=E_i$ for some $i$. Hence, if $A\in\ext(\D)$ with $|\gamma(A)|=1$, it follows that $A\in\tilde{\D}'$.
\end{proof}

The points in $\tilde{\D}'$ are extreme points in all cases, regardless of $\e$. Furthermore, in the trivial case of $\e=0$, the set $\tilde{\D}'$ are the only extreme points.

\begin{corollary}
Let $\e=0$. Then,
$$\ext(\D)=\tilde{\D}'.$$
\end{corollary}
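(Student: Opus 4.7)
The plan is to combine the earlier Remark (that $\e = 0$ forces $a_{ij} = a_{kj}$ for all $i,j,k$) with Theorem~\ref{th:thm1} and Corollary~\ref{cor:constantcols}. Since Theorem~\ref{th:thm1} already establishes the inclusion $\tilde{\D}' \subseteq \ext(\D)$, the only remaining task is the reverse inclusion: showing that any extreme point of $\D$ has exactly one non-zero column when $\e = 0$.

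First, I would invoke the Remark following Definition~\ref{df:dp} to observe that whenever $\e = 0$, every $A \in \D$ has constant columns, i.e., for each $j$ there exists some scalar $p_j \ge 0$ with $A^{(j)} = p_j \mathbf{1}$. Equivalently, every row of $A$ is the same probability vector.

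Next, let $A \in \ext(\D)$. Suppose for contradiction that $|\gamma(A)| \ge 2$. By the observation above, for every $j \in \gamma(A)$ we have $A^{(j)} = p_j \mathbf{1}$ with $p_j > 0$, so $A^{(j)}$ is a non-zero constant multiple of $\mathbf{1}$. But Corollary~\ref{cor:constantcols} states that for an extreme point with $|\gamma(A)| \ge 2$, no non-zero column can be of the form $k\mathbf{1}$. This is a contradiction, so $|\gamma(A)| = 1$.

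Finally, once $|\gamma(A)| = 1$, the second part of Theorem~\ref{th:thm1} directly gives $A \in \tilde{\D}'$. Combined with the forward inclusion from Theorem~\ref{th:thm1}, we conclude $\ext(\D) = \tilde{\D}'$. There is no real obstacle here; the whole argument is essentially an application of the previously proved structural results. The only minor subtlety is making explicit that the Remark gives us precisely the hypothesis of Corollary~\ref{cor:constantcols} for every non-zero column.
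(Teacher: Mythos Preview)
Your proposal is correct and follows essentially the same approach as the paper's proof: both observe that $\e=0$ forces every column to be constant, then invoke Corollary~\ref{cor:constantcols} to rule out $|\gamma(A)|\ge 2$, and finish via Theorem~\ref{th:thm1}. You are slightly more explicit than the paper in separating out the two inclusions, but the argument is the same.
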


\begin{proof}
Let $\e=0$. Then, for all $A\in\D$, we have $a_{kj}\le a_{ij}\le a_{kj}$, hence $a_{ij}=a_{kj}$ for all $i,j,k$, i.e.\ entries in the same column are equal.  It now follows immediately from Corollary \ref{cor:constantcols} that if $A$ is an extreme point, $\gamma(A) = 1$ and hence that $A \in \tilde{\D}'$ as claimed. 
\end{proof}

\subsection{Extreme points with two columns non-zero}

Next, we consider the case of two non-zero columns. Although Theorem~\ref{th:maxmmin} allows for many loose entries to occur in these extreme points, Theorem~\ref{th:2nonzcols} below states that no loose entries are possible.

\begin{theorem}[$|\gamma(A)|=2$]\label{th:2nonzcols}
Let $A\in\ext(\D)$ where $|\gamma(A)|=2$. Then $A$ has no loose entries.
\end{theorem}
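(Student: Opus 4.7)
The plan is to exploit the stochasticity constraint together with Lemma~\ref{lm:samerow} to force a contradiction, after first using the bound in Theorem~\ref{th:maxmmin} to show that the differential privacy constraints must be tight on both non-zero columns.

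By Theorem~\ref{th:perm}, I may permute columns and assume without loss of generality that $\gamma(A) = \{1, 2\}$. Then every row $i$ of $A$ has the form $(a_{i1}, a_{i2}, 0, \ldots, 0)$ with $a_{i1} + a_{i2} = 1$. Write $m_j = \min_i a_{ij}$ and $M_j = \max_i a_{ij}$ for $j \in \{1, 2\}$.

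The first step is to verify that $M_j = e^\e m_j$ for $j = 1, 2$. Suppose instead that differential privacy is not tight on some column $j \in \{1, 2\}$, so $M_j < e^\e m_j$. Then $e^\e m_j > M_j \geq a_{ij}$ and $e^{-\e} M_j < m_j \leq a_{ij}$ for every $i$, so $a_{ij} \notin \{e^\e m_j, e^{-\e} M_j\}$. Hence $(i,j) \in \lambda(A)$ for all $i$, giving $|\lambda(A)| \geq n$. This contradicts the bound $|\lambda(A)| \leq n - 2$ from Theorem~\ref{th:maxmmin} (for $n \geq 1$). Consequently, differential privacy is tight on both columns $1$ and $2$, and loose entries in these columns coincide exactly with those strictly between the column min and column max.

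The second step combines tightness with stochasticity. Since $a_{i2} = 1 - a_{i1}$ for every $i$, we have $m_2 = 1 - M_1$ and $M_2 = 1 - m_1$. Therefore $a_{i1} \in (m_1, M_1)$ if and only if $a_{i2} \in (m_2, M_2)$. In other words, $(i, 1) \in \lambda(A)$ if and only if $(i, 2) \in \lambda(A)$.

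Finally, if $A$ had any loose entry $(i_0, j_0)$, the equivalence just established would force both $(i_0, 1)$ and $(i_0, 2)$ to lie in $\lambda(A)$, contradicting Lemma~\ref{lm:samerow}. Hence $\lambda(A) = \emptyset$, as required.

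The step I expect to be the main obstacle is the first one. It is easy to overlook that the definition of $\lambda(A)$ classifies \emph{every} entry of a non-tight column as loose (since in that case neither $e^\e m_j$ nor $e^{-\e} M_j$ is attained anywhere in the column); without that observation, Theorem~\ref{th:maxmmin} alone would not eliminate the possibility of a non-zero column on which differential privacy is slack, and the clean stochasticity pairing in the subsequent step would break down.
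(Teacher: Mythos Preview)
Your argument is correct, and it is genuinely different from the paper's route. Both proofs invoke Theorem~\ref{th:maxmmin} and Lemma~\ref{lm:samerow}, but they use them in different places and for different purposes. The paper applies Theorem~\ref{th:maxmmin} only to guarantee the existence of at least two rows free of loose entries, then uses Lemma~\ref{lm:nonconstrow} to force one such row to have the pattern $(e^\e,1)$ in $\tilde A$, obtains the equation $m_1e^\e+m_2=1$, and then rules out loose entries in each column separately by contradiction, calling on Corollary~\ref{cor:constantcols} in each case. Your proof instead pushes Theorem~\ref{th:maxmmin} harder at the outset, using it to show that the differential-privacy constraint must actually be tight on \emph{both} non-zero columns (so $M_j=e^\e m_j$), which reduces ``loose'' to ``strictly between $m_j$ and $M_j$''. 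The affine relation $a_{i2}=1-a_{i1}$ coming from stochasticity then pairs up loose entries in the two columns row by row, and a single application of Lemma~\ref{lm:samerow} finishes the job. Your approach is shorter and avoids Lemma~\ref{lm:nonconstrow} and Corollary~\ref{cor:constantcols} entirely; its cost is that it leans on the special two-column structure (the bijection $a_{i2}=1-a_{i1}$) and would not extend in any obvious way to larger $|\gamma(A)|$, whereas the paper's inequality-chasing style is closer in spirit to the techniques used elsewhere for the general $\tilde\D$ analysis.
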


	\begin{proof}\begin{subequations}
	Without loss of generality, assume that $\gamma(A)=\{1,2\}$.  Define $m\in\R^n$ by $m_j=\min_i a_{ij}$ for $1 \leq j \leq n$ and define $\tilde{A}$ so that $A=\tilde{A}\diag(m)$.  Then $\tilde{a}_{ij}\in[1,e^\e]\cup\{0\}$ for $1 \leq i, j \leq n$. 
	
	By Theorem~\ref{th:maxmmin}, $|\lambda(A)|\le n-2$, so there exist at least two rows with no loose entries.  Let row $k$ be one of these rows. Then $\tilde{a}_{k1}, \tilde{a}_{k2}\in\{1,e^\e\}$, but by Lemma~\ref{lm:nonconstrow}, $\tilde{a}_{k1}\neq \tilde{a}_{k2}$.  We can assume that $\tilde{a}_{k1}=e^\e$ and $\tilde{a}_{k2}=1$ (otherwise just swap columns 1 and 2). As $A$ is stochastic, 
	\begin{equation}\label{eq:th11a}
	m_1 e^\e+m_2=1.
	\end{equation}
	
	By Lemma~\ref{lm:samerow}, for all rows $j$, at least one of $\tilde{a}_{j1}$, $\tilde{a}_{j2}$ must be in $\{1,e^\e\}$.  Moreover, in order to satisfy (\ref{eq:th11a}), $\tilde{a}_{j1}=e^\e$ if and only if $\tilde{a}_{j2}=1$.
	
	Suppose therefore that there exists a row $j$ where $\tilde{a}_{j1}\in(1,e^\e)$ corresponding to a loose entry in $A$.  It follows from \eqref{eq:th11a} that $\tilde{a}_{j2}=e^\e$. Hence
	\begin{equation}\label{eq:th11b}
	\begin{split}
	1&=m_1\tilde{a}_{j1}+m_2e^\e\\
	&> m_1+m_2e^\e.
	\end{split}
	\end{equation}
	
	It follows from Corollary \ref{cor:constantcols} that there is some $j^*$ such that $\tilde{a}_{j^*1}=1$, implying
	\begin{equation*}\begin{split}
	1&=m_1+m_2\tilde{a}_{j^*2}\\
	&\le m_1+m_2e^\e,
	\end{split}\end{equation*}
	contradicting (\ref{eq:th11b}). Therefore there are no loose entries in the first column.
	
	Now suppose there exists a row $j$ where $\tilde{a}_{j2}\in(1,e^\e)$.  As above, it follows that $\tilde{a}_{j1}=1$. Hence,
	\begin{equation}\label{eq:th11d}
	\begin{split}
	1&=m_1+m_2\tilde{a}_{j2}\\
	&<m_1+m_2e^\e.
	\end{split}
	\end{equation}
	
	As before, it follows from Corollary \ref{cor:constantcols} that there is some $j^*$ such that $\tilde{a}_{j^*2}=e^\e$, hence,
	\begin{equation*}\begin{split}
	1&=m_1\tilde{a}_{j^*1}+m_2e^\e\\
	&\ge m_1+m_2e^\e,
	\end{split}\end{equation*}
	contradicting (\ref{eq:th11d}). Therefore there are no loose entries in the second column.
	
	Hence $|\lambda(A)|=0$.
	\end{subequations}\end{proof}
	
Using this result along with Lemma~\ref{lm:nonconstrow}, we can describe the two non-zero columns.

\begin{corollary}\label{cr:2nzcols}
Let $A\in\ext(\D)$ with $|\gamma(A)|=2$. Let $\gamma(A)=\{j,k\}$ and $\tilde{A}$ be given by \eqref{eq:tildeA}. Then, for every $1\le i\le n$, we have
\begin{align}\label{eq:gamma2}
(\tilde{a}_{ij}, \tilde{a}_{ik})\in\{(1, e^\e), (e^\e, 1)\}.
\end{align}
\end{corollary}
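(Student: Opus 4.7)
The plan is to combine Theorem~\ref{th:2nonzcols} (no loose entries) with Lemma~\ref{lm:nonconstrow} (no constant non-zero row in $\tilde{A}$) to pin down the two possible row patterns.

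First I would unpack what ``no loose entries'' means in terms of $\tilde{A}$. By construction of $\tilde{A}$ via \eqref{eq:tildeA}, for every $\ell \in \gamma(A)$ we have $\min_i \tilde{a}_{i\ell} = 1$ and hence $\max_i \tilde{a}_{i\ell} \le e^\e$; the differential privacy constraint then forces $\max_i \tilde{a}_{i\ell} = e^\e$ precisely when some pair in column $\ell$ is tight. Since Theorem~\ref{th:2nonzcols} gives $\lambda(A)=\emptyset$, every entry in each non-zero column equals either its column minimum or its column maximum, so for every $i$ and every $\ell \in \gamma(A)=\{j,k\}$,
\[
\tilde{a}_{i\ell} \in \{1, e^\e\}.
\]
(Note that $\max_i \tilde{a}_{i\ell} = e^\e$ must indeed be attained in each non-zero column, otherwise that column would be constant, contradicting Corollary~\ref{cor:constantcols}.)

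Next I would apply Lemma~\ref{lm:nonconstrow}, which states that for each row $i$ there exist two indices in $\gamma(A)$ at which the entries of $\tilde{A}$ differ. Since $\gamma(A)=\{j,k\}$ has exactly two elements, this simply says $\tilde{a}_{ij} \neq \tilde{a}_{ik}$. Combined with the previous step, the only two options for the ordered pair $(\tilde{a}_{ij}, \tilde{a}_{ik})$ are $(1, e^\e)$ and $(e^\e, 1)$, which is exactly \eqref{eq:gamma2}.

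There is essentially no obstacle here beyond assembling the two prior results; the work has already been done in proving Theorem~\ref{th:2nonzcols} and Lemma~\ref{lm:nonconstrow}. The only mild care needed is to confirm that both values $1$ and $e^\e$ are actually realised in each non-zero column of $\tilde{A}$ (so that the ``no loose entries'' condition genuinely restricts each entry to $\{1,e^\e\}$ rather than allowing a column stuck at a single value), and this is immediate from Corollary~\ref{cor:constantcols} since $|\gamma(A)| = 2 \ge 2$.
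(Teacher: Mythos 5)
Your proposal is correct and follows essentially the same route as the paper: invoke Theorem~\ref{th:2nonzcols} to get $\tilde{a}_{i\ell}\in\{1,e^\e\}$ for $\ell\in\gamma(A)$ and then Lemma~\ref{lm:nonconstrow} to force $\tilde{a}_{ij}\neq\tilde{a}_{ik}$ in every row. The extra remark that both values $1$ and $e^\e$ are actually attained in each non-zero column (via Corollary~\ref{cor:constantcols}) is a valid and harmless elaboration of a step the paper leaves implicit.
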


	\begin{proof}
	By Theorem~\ref{th:2nonzcols}, $\tilde{a}_{ij}\in\{1,e^\e\}$ and $\tilde{a}_{ik}\in\{1, e^\e\}$ for each $1\le i\le n$.
	
	By Lemma~\ref{lm:nonconstrow}, we must have $\tilde{a}_{ij}\neq\tilde{a}_{ik}$ for each $i$. So, $\tilde{a}_{ij}=e^\e$ if and only if $\tilde{a}_{ik}=1$, and $\tilde{a}_{ij}=1$ if and only if $\tilde{a}_{ik}=e^\e$.
	\end{proof}

The follow example illustrates the consequence of Corollary~\ref{cr:2nzcols}.

\begin{example}
Every extreme point $A\in\ext(\D)$ with $|\gamma(A)|=2$ must be of the form shown in (\ref{eq:gamma2}), and furthermore both non-zero columns of $\tilde{A}$ must contain at least one 1 and one $e^\e$.

Let $n=4$ and $A\in\ext(\D)$ with $|\gamma(A)|=2$. One example of such an $A$ is as follows:
$$A=\frac{1}{1+e^\e}\left(\begin{array}{cccc}1&0&e^\e&0\\1&0&e^\e&0\\e^\e&0&1&0\\1&0&e^\e&0\end{array}\right)\in\ext(\D).$$
\end{example}

\subsection{Extreme points with every element constrained}\label{sc:mainmainres}

The next definition is necessary before we can state Theorem \ref{th:thm2} which is the main result of the paper.   

\begin{definition}
Let $\tilde{\D}\subset\D$ be defined as follows:
\begin{equation}\label{eq:dtilde}
\tilde{\D}=\{A\in\D\mid\rank(A)=|\gamma(A)|, \lambda(A)=\emptyset\}.
\end{equation}
\end{definition}

The set $\tilde{\D}$ contains matrices with between 2 and $n$ non-zero columns, which satisfy the rank condition of Theorem \ref{th:rank} and have no loose entries (i.e.\ $\tilde{a}_{ij}\in\{0,1,e^\e\}$ for each $i,j$). We now show that every one of these matrices is an extreme point of $\D$.

\begin{theorem}\label{th:thm2}
Let $\e>0$. Then,
$$\tilde{\D}\subset\ext(\D).$$
\end{theorem}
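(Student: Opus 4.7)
The plan is to show $\tilde{\D}\subseteq\ext(\D)$ directly from the definition of an extreme point. Fix $A \in \tilde{\D}$ and suppose $A = \tfrac{1}{2}(B + C)$ with $B, C \in \D$; the aim is to deduce $B = C = A$. The argument splits into analysing the zero columns and the nonzero columns of $A$ separately, then tying the two together via the row-stochastic constraint. This mirrors the perturbation argument used in the proof of Theorem~\ref{th:rank}, but now run in reverse: the hypotheses $\rank(A) = |\gamma(A)|$ and $\lambda(A)=\emptyset$ are used to rule out, respectively, linear dependence among the nonzero columns and slack in the DP constraints.

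For each $j \notin \gamma(A)$, the entries $a_{ij} = 0$ together with $b_{ij}, c_{ij} \ge 0$ immediately force $B^{(j)} = C^{(j)} = 0$. For $j \in \gamma(A)$, write $m_j = \min_k a_{kj}$; since $\lambda(A) = \emptyset$, every entry $a_{ij}$ equals either $m_j$ or $e^\e m_j$, partitioning the row indices into sets $S_j^{\min}$ and $S_j^{\max}$. For any $i \in S_j^{\min}$ and $k \in S_j^{\max}$ the constraint $a_{kj} = e^\e a_{ij}$ is tight on $A$. Combining the analogous inequalities $b_{kj} \le e^\e b_{ij}$ and $c_{kj} \le e^\e c_{ij}$ with $b_{kj} + c_{kj} = 2a_{kj} = e^\e(b_{ij} + c_{ij})$ forces each to be an equality, so this tightness is inherited by $B$ and $C$. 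A short bookkeeping step then shows that $B^{(j)}$ is constant on $S_j^{\min}$, constant on $S_j^{\max}$, with the latter value equal to $e^\e$ times the former; i.e.\ $B^{(j)} = \beta_j \tilde{A}^{(j)}$ for some $\beta_j \ge 0$. Similarly $C^{(j)} = \gamma_j \tilde{A}^{(j)}$, and of course $A^{(j)} = m_j \tilde{A}^{(j)}$.

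Row-stochasticity of $A$, $B$ and $C$ now reads $\tilde{A}\beta = \tilde{A}\gamma = \tilde{A}m = \mathbf{1}$, where $\beta,\gamma,m \in \R^n$ are supported on $\gamma(A)$. Since $A = \tilde{A}\diag(m)$ with $m_j > 0$ precisely on $\gamma(A)$, we have $\rank(\tilde{A}) = \rank(A) = |\gamma(A)|$, so $\{\tilde{A}^{(j)} : j \in \gamma(A)\}$ is linearly independent. Applying this to $\tilde{A}(\beta - m) = 0$ and $\tilde{A}(\gamma - m) = 0$ yields $\beta = \gamma = m$, whence $B = C = A$ and $A \in \ext(\D)$.

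The main obstacle is the propagation of tight DP constraints from $A$ to the splitters $B$ and $C$, since this is what converts the hypothesis $\lambda(A) = \emptyset$ into the rigid column structure $B^{(j)} \propto \tilde{A}^{(j)}$. Once this is in place, the rank hypothesis is exactly what is needed to pin down the scaling scalars uniquely; without it, any nontrivial linear dependence among the $\tilde{A}^{(j)}$ would permit a small perturbation giving $B \neq C$ with $A = \tfrac{1}{2}(B+C)$, exactly as in Theorem~\ref{th:rank}.
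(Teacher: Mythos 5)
Your proof is correct, and while it ends exactly where the paper's proof ends, it gets to the crucial intermediate conclusion by a different and cleaner mechanism. The paper introduces the per-column maximal deviation $\Delta_j=\tfrac{1}{2}\max_i|b_{ij}-c_{ij}|$, shows through a case analysis that the maximal deviation must occur at a maximal entry of the column, and then pins down every entry of $B$ and $C$ ($b_{ij}=m_j+e^{-\e}g_j\Delta_j$ on minimal rows, $b_{ij}=e^\e m_j+g_j\Delta_j$ on maximal rows), arriving at $B=A+e^{-\e}\tilde{A}\diag(g_j\Delta_j)$. You instead observe that a constraint tight on $A=\tfrac{1}{2}(B+C)$ must be tight on $B$ and $C$ separately: summing $b_{kj}\le e^\e b_{ij}$ and $c_{kj}\le e^\e c_{ij}$ and comparing with $2a_{kj}=2e^\e a_{ij}$ forces equality in each, which immediately yields $B^{(j)}=\beta_j\tilde{A}^{(j)}$ and $C^{(j)}=\gamma_j\tilde{A}^{(j)}$; this replaces the paper's quantitative $\Delta_j$ bookkeeping with a two-line convexity argument, and the endgame (row-stochasticity of $A,B,C$ plus linear independence of the non-zero columns of $\tilde{A}$, which follows from $\rank(A)=|\gamma(A)|$ since $\tilde{A}$ is a positive column rescaling of $A$ on $\gamma(A)$) is then identical. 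Two small points you should make explicit: first, that $\lambda(A)=\emptyset$ together with $\e>0$ forces, for each $j\in\gamma(A)$, that $m_j>0$, that $\max_k a_{kj}=e^\e m_j$, and that \emph{both} $S_j^{\min}$ and $S_j^{\max}$ are non-empty (non-emptiness is what lets you fix one index in the opposite set to deduce constancy of $B^{(j)}$ on each part, and it is exactly here that the assumption $\e>0$ enters, ruling out constant non-zero columns); second, your scalar $\gamma_j$ clashes notationally with $\gamma(A)$ and should be renamed. Neither affects correctness.
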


	\begin{proof}\begin{subequations}
	Let $A\in\tilde{\D}$ and let $B,C\in\D$ where $\frac{1}{2}(B+C)=A$. Define $m_j=\min_i a_{ij}$ for each $j\in\{1,\dots, n\}$ (Note that $m_j=0$ for each $j\notin\gamma(A)$, and $a_{ij}\in\{m_j, e^\e m_j\}$ for each $i,j$ since $\lambda(A)=\emptyset$).
	
	Let $\Delta_j=\frac{1}{2}\max_{i}|b_{ij} - c_{ij}|$ for each $j\in\gamma(A)$.  As $B$ and $C$ are nonnegative, it is not hard to see that:
	\begin{align}\label{eq:pt0}
	\Delta_j=0,\quad\forall\;j\notin\gamma(A).
	\end{align}
	
	We shall show that the same conclusion must also hold for $j \in \gamma(A)$.   To this end, let $j^*\in\gamma(A)$ be given where $\Delta_{j^*}>0$.  Assume without loss of generality that $b_{i_1j^*}=a_{i_1j^*}+\Delta_{j^*}$ for some $i_1$ (if not, swap $B$ and $C$). 

We claim that $a_{i_1j^*}\neq m_{j^*}$.  Suppose otherwise.  Then there exists $i_2$ where $a_{i_2j^*}=e^\e m_{j^*}$. However, since $\frac{1}{2}(B+C)=A$, we have $c_{i_1j^*}=2a_{i_1j^*}-b_{i_1j^*}=a_{i_1j^*}-\Delta_{j^*}$, and since $C\in\D$, we have
	\begin{align*}
	c_{i_2j^*}&\le e^\e c_{i_1j^*}\\&=e^\e a_{i_1j^*}-e^\e\Delta_{j^*}\\&=a_{i_2j^*}-e^\e\Delta_{j^*}
	\end{align*}
	By the definition of $\Delta_{j^*}$, we must have $c_{i_2j^*}\ge a_{i_2j^*}-\Delta_{j^*}$. Hence it would follow that $\Delta_{j^*}\ge e^\e\Delta_{j^*}$, a contradiction since $\e>0$.  Thus, $a_{i_1j^*}=e^\e m_{j^*}$ as claimed (i.e. the max change occurs on the max element of the column).
	
	We now know that $b_{i_1j^*}=e^\e m_{j^*}+\Delta_{j^*}$. Let $$I_{j^*}=\{i:a_{ij^*}=m_{j^*}\}.$$ Then for every $i\in I_{j^*}$, since $B\in\D$, we get $e^\e m_{j^*}+\Delta_{j^*}= b_{i_1j^*} \le e^\e b_{ij^*}$, hence
	\begin{align}\label{eq:pt2a}
	b_{ij^*}\ge m_{j^*}+e^{-\e}\Delta_{j^*}.
	\end{align}
	Also, for every $i\in I_{j^*}$, since $C\in\D$,
	\begin{align*}c_{i_1j^*}&=e^\e m_{j^*}-\Delta_{j^*}\\&\le e^\e c_{ij^*}\\&=e^e(2a_{ij^*}-b_{ij^*})\\&=2e^\e m_{j^*}-e^\e b_{ij^*},\end{align*}
	hence $e^\e m_{j^*}-\Delta_{j^*}\le2e^\e m_{j^*}-e^\e b_{ij^*}$, or rewriting,
	\begin{align}\label{eq:pt2b}
	b_{ij^*}\le m_{j^*}+e^{-\e}\Delta_{j^*}.
	\end{align}
	
	Hence, from (\ref{eq:pt2a}) and (\ref{eq:pt2b}),
	\begin{equation}\begin{split}\label{eq:prt2}
	b_{ij^*}&=m_{j^*}+e^{-\e}\Delta_{j^*}\\
	&=a_{ij^*}+e^{-\e}\Delta_{j^*},
	\end{split}\end{equation}
	for every $i\in I_{j^*}$.
	
	It follows readily that for every $i \in I_{j^*}$, $c_{ij^*} = m_{j^*}-e^{-\e}\Delta_{j^*}$.  
	
	We next consider indices $i \notin I_{j^*}$.   Choose some $i_2\in I_{j^*}$.  For all $i\notin I_{j^*}$, $a_{ij^*}=e^\e m_{j^*}$, then
	\begin{equation}\label{eq:pt3a}
	b_{ij^*}\le e^\e b_{i_2j^*}=e^\e m_{j^*}+\Delta_{j^*},
	\end{equation}
	and
	\begin{align*}c_{ij^*}&=2a_{ij^*}-b_{ij^*}\\&=2e^\e m_{j^*}-b_{ij^*}\\&\le e^\e c_{i_2j^*}\\&=e^\e m_{j^*}-\Delta_{j^*},\end{align*}
	which can be rewritten as
	\begin{equation}\label{eq:pt3b}
	b_{ij^*}\ge e^\e m_{j^*}+\Delta_{j^*}.
	\end{equation}
	
	Hence, from (\ref{eq:pt3a}) and (\ref{eq:pt3b}),
	\begin{equation}\begin{split}\label{eq:prt3}
	b_{ij^*}&=e^\e m_{j^*}+\Delta_{j^*}\\
	&=a_{ij^*}+\Delta_{j^*},
	\end{split}\end{equation}
	for all $i\notin I_{j^*}$.
	
	Putting everything together, it follows from (\ref{eq:pt0}), (\ref{eq:prt2}) and (\ref{eq:prt3}),
	$$b_{ij} = \begin{cases} 0, & j\notin\gamma(A), \\ m_j+e^{-\e}g_j\Delta_j, & j\in\gamma(A), i\in I_j \\ e^\e m_j+g_j\Delta_j, & j\in\gamma(A), i\notin I_j \end{cases}$$
	where $g_j\in\{-1,1\}$, for all $j\in\gamma(A)$.
	
	Similarly, since $B+C=2A$,
	$$c_{ij} = \begin{cases} 0, & j\notin\gamma(A), \\ m_j-e^{-\e}g_j\Delta_j, & j\in\gamma(A), i\in I_j \\ e^\e m_j-g_j\Delta_j, & j\in\gamma(A), i\notin I_j. \end{cases}$$
	
	Rewriting in terms of $\tilde{A}$ (given by \eqref{eq:tildeA}), $b_{ij}=a_{ij}+g_j e^{-\e} \frac{a_{ij}}{m_j}\Delta_j=a_{ij}+g_je^{-\e}\tilde{a}_{ij}\Delta_j$ and $c_{ij}=a_{ij}-g_je^{-\e}\tilde{a}_{ij}\Delta_j$ for all $i,j$.
	
	Hence,
	\begin{align*}
	B&=A+e^{-\e}\tilde{A}\diag_{1\le j\le n}(g_j \Delta_j)\\
	C&=A-e^{-\e}\tilde{A}\diag_{1\le j\le n}(g_j \Delta_j).
	\end{align*}
	\end{subequations}
	Since $A,B$ are stochastic, we require
	$$e^{-\e}\tilde{A}\diag_{1\le j\le n}(g_j \Delta_j)\mathbf{1}=0.$$
	
	This equation defines a linear relationship between the columns of $\tilde{A}$.  Moreover, we know that $\Delta_j = 0$ for $j \notin \gamma(A)$.  If $\Delta_j \neq 0$ for any $j \in \gamma(A)$, it would imply that the non-zero columns of $\tilde{A}$ and hence those of $A$ are linearly dependent, contradicting the assumption that $\textrm{rank}(A) = |\gamma(A)|$.  It follows that $\Delta_j = 0$ for all $j$ and hence that $B = C = A$.  This completes the proof.  
\end{proof}

Furthermore, the set $\tilde{\D}$ contains all extreme points of $\D$ which have no loose entries.

\begin{corollary}\label{cr:thm2}
Let $A\in\D$ with $\lambda(A)=\emptyset$. Then, $A\in\ext(\D)$ if and only if $A\in\tilde{\D}$.
\end{corollary}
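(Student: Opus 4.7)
The plan is to dispatch this corollary by stitching together two results already at hand; no substantive new argument is needed, and the proof should be very short.

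For the ``only if'' direction, I would assume $A\in\ext(\D)$. Theorem \ref{th:rank} immediately gives $\rank(A)=|\gamma(A)|$, and combined with the standing hypothesis $\lambda(A)=\emptyset$, this matches the defining conditions of $\tilde{\D}$ in (\ref{eq:dtilde}) verbatim, so $A\in\tilde{\D}$ follows by definition.

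For the ``if'' direction, I would simply invoke Theorem \ref{th:thm2}, which already asserts $\tilde{\D}\subset\ext(\D)$ (under $\e>0$). The hypothesis $\lambda(A)=\emptyset$ plays no additional role here beyond being baked into membership of $\tilde{\D}$.

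I anticipate no real obstacle. One minor point worth flagging is that under $\lambda(A)=\emptyset$ together with $\e>0$, the case $|\gamma(A)|=1$ is automatically excluded (the single non-zero column of any $E_i$ is constantly $1$, and for such a column neither $e^{\e}\min_k a_{ki}=e^{\e}$ nor $e^{-\e}\max_k a_{ki}=e^{-\e}$ equals $1$, so every entry in that column is loose). This means the invocation of Theorem \ref{th:thm2} is unambiguous. In essence, the corollary is just a clean repackaging of Theorems \ref{th:rank} and \ref{th:thm2} under the common hypothesis $\lambda(A)=\emptyset$.
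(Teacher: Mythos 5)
Your proposal is correct and follows essentially the same route as the paper: the ``only if'' direction is Theorem~\ref{th:rank} combined with the hypothesis $\lambda(A)=\emptyset$, and the ``if'' direction is a direct appeal to Theorem~\ref{th:thm2}. Your side remark about $|\gamma(A)|=1$ being excluded is a harmless extra observation not needed by (or present in) the paper's argument.
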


	\begin{proof}
	``$\Rightarrow$": Let $A\in\ext(\D)$ with $\lambda(A)=\emptyset$. By Theorem~\ref{th:rank}, $\rank(A)=|\gamma(A)|$, hence $A\in\tilde{\D}$.
	
	``$\Leftarrow$": $A\in\tilde{\D}\Rightarrow A\in\ext(\D)$ by Theorem~\ref{th:thm2}.
	\end{proof}

\subsection{Extreme points with all columns non-zero}

From an application point of view, it is entirely reasonable to only consider matrices (and the resulting response mechanism) with no zero columns.

Having a zero column in a matrix that defines a response mechanism means that the mechanism never releases a particular (or multiple) values as its output. In many circumstances, this feature will not be required of a mechanism.

Using Theorem~\ref{th:thm2}, we now present the following corollary, which gives a complete characterisation of extreme points without zero columns.

\begin{corollary}\label{cr:main}
Let $A\in\D$, with $|\gamma(A)|=n$. Then, $A\in\ext(\D)$ if and only if $A\in\tilde{\D}$

Equivalently,
$$\{A\in\ext(\D):|\gamma(A)|=n\}=\{A\in\tilde{\D}:|\gamma(A)|=n\}.$$
\end{corollary}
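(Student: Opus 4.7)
The plan is to prove both directions using results already established, with the equality $|\gamma(A)| = n$ being exactly what allows us to pin down the loose entries.

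For the ``if'' direction, suppose $A \in \tilde{\D}$. By the definition of $\tilde{\D}$, we have $A \in \D$ and $A$ satisfies both the rank condition and $\lambda(A) = \emptyset$. Then Theorem~\ref{th:thm2} immediately yields $A \in \ext(\D)$. (Note: the condition $|\gamma(A)| = n$ is not needed here; the ``if'' direction holds more generally for any $A \in \tilde{\D}$.)

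For the ``only if'' direction, suppose $A \in \ext(\D)$ with $|\gamma(A)| = n$. First I would verify the rank condition: Theorem~\ref{th:rank} gives $\rank(A) = |\gamma(A)| = n$ immediately. Next, to confirm $\lambda(A) = \emptyset$, I would invoke Theorem~\ref{th:maxmmin}, which (assuming $n \geq 2$, so that $|\gamma(A)| \geq 2$) gives
\begin{equation*}
|\lambda(A)| \le n - |\gamma(A)| = n - n = 0,
\end{equation*}
so $\lambda(A) = \emptyset$. For the degenerate case $n = 1$, the single entry must equal $1$ and the matrix $[1]$ trivially lies in $\tilde{\D}$. Combining the rank condition with the absence of loose entries places $A$ in $\tilde{\D}$ by \eqref{eq:dtilde}.

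There is no real obstacle to overcome: the corollary is essentially a clean specialisation of the previously established machinery. All the heavy lifting is absorbed into Theorem~\ref{th:thm2} (for sufficiency), Theorem~\ref{th:rank} (for the rank condition), and Theorem~\ref{th:maxmmin} (which forces $\lambda(A) = \emptyset$ precisely when the bound on loose entries collapses to zero at $|\gamma(A)| = n$). Equivalently, one could just appeal to Corollary~\ref{cr:thm2}: once Theorem~\ref{th:maxmmin} gives $\lambda(A) = \emptyset$, the biconditional there yields the result in one line. The equivalent set-form statement follows by collecting matrices satisfying $|\gamma(A)| = n$ on each side.
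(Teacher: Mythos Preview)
Your proposal is correct and follows essentially the same route as the paper: the forward direction uses Theorem~\ref{th:rank} for the rank condition and Theorem~\ref{th:maxmmin} to force $|\lambda(A)|\le n-n=0$, while the reverse direction is immediate from Theorem~\ref{th:thm2}. Your additional remarks on the $n=1$ case and the alternative via Corollary~\ref{cr:thm2} are fine but not needed.
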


\begin{proof}
``$\Rightarrow$'': Let $A\in\ext(\D)$ have $n$ non-zero columns. Then, $\rank(A)=n$ by Theorem~\ref{th:rank} and $\lambda(A)=\emptyset$ by Theorem~\ref{th:maxmmin}.

``$\Leftarrow$'': Let $A\in\D$ such that $\rank(A)=n$ and $\lambda(A)=\emptyset$. Then $A\in\ext(\D)$ by Theorem~\ref{th:thm2}.
\end{proof}

We now have necessary and sufficient conditions for finding and determining extreme points with $n$ non-zero columns.

\section{Discussion}\label{sc:special}

We now take a brief look at a number of useful and interesting consequences of the results given in Sections~\ref{sc:elem} and \ref{sc:main}.

\begin{description}
\item[$\ext(\D)$ for small $n$:]  From Theorems~\ref{th:thm1} and \ref{th:thm2}, we know that $\tilde{\D}'\cup\tilde{\D}\subseteq\ext(\D)$; with the addition of Theorem~\ref{th:2nonzcols} we can make further observations for small $n$.
\begin{theorem}
Let $n\le3$, then
$$\ext(\D)=\tilde{\D}'\cup\tilde{\D}.$$
\end{theorem}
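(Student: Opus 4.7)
The forward inclusion $\tilde{\D}' \cup \tilde{\D} \subseteq \ext(\D)$ is already covered: Theorem~\ref{th:thm1} gives $\tilde{\D}' \subseteq \ext(\D)$ and Theorem~\ref{th:thm2} gives $\tilde{\D} \subseteq \ext(\D)$. So the real work is the reverse inclusion, and my plan is a simple case analysis on $|\gamma(A)|$ for $A \in \ext(\D)$ when $n \le 3$.

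Fix $A \in \ext(\D)$. Since $A$ is stochastic and non-zero, $|\gamma(A)| \in \{1, 2, \ldots, n\}$, so for $n \le 3$ there are at most three cases to consider. First, if $|\gamma(A)| = 1$, then Theorem~\ref{th:thm1} immediately gives $A \in \tilde{\D}'$. Second, if $|\gamma(A)| = 2$, then Theorem~\ref{th:2nonzcols} tells us that $\lambda(A) = \emptyset$, and Corollary~\ref{cr:thm2} then places $A$ in $\tilde{\D}$. Third, if $|\gamma(A)| = 3$, this only arises when $n = 3$, and Theorem~\ref{th:maxmmin} gives $|\lambda(A)| \le n - |\gamma(A)| = 0$, so again $\lambda(A) = \emptyset$ and Corollary~\ref{cr:thm2} yields $A \in \tilde{\D}$.

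In all cases $A \in \tilde{\D}' \cup \tilde{\D}$, which combined with the reverse inclusion gives equality. There is no genuine obstacle here: the statement is really a corollary that records the fact that, for $n \le 3$, the only combination of values $(|\gamma(A)|, n)$ that would allow strictly loose entries under the bound of Theorem~\ref{th:maxmmin} is $(2, 3)$, and that case is ruled out separately by Theorem~\ref{th:2nonzcols}. The first place where the characterisation can fail to be exhaustive is $n = 4$ with $|\gamma(A)| = 3$, where Theorem~\ref{th:maxmmin} permits up to one loose entry and our present results do not force $\lambda(A) = \emptyset$.
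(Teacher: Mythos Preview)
Your proof is correct and is precisely the argument the paper has in mind: the paper states this theorem in the Discussion section without an explicit proof, merely indicating that it follows from Theorems~\ref{th:thm1}, \ref{th:thm2} and \ref{th:2nonzcols}, and your case analysis on $|\gamma(A)|$ is exactly how those pieces fit together. Your closing remark is also accurate in spirit, though note that the paper in fact conjectures the characterisation still holds at $n=4$ and only exhibits an explicit extreme point outside $\tilde{\D}'\cup\tilde{\D}$ at $n=5$ with $|\gamma(A)|=4$.
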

\item[Extreme points for $n=4$:] We therefore have a complete characterisation of all extreme points up to $n=3$.  While we lack a formal proof, extensive computer simulations suggest it is also true for $n=4$ leading to the following conjecture.\\
Let $n\le4$: then
$$\ext(\D)=\tilde{\D}'\cup\tilde{\D}.$$
\item[$\ext(\D)$ for $n\ge5$:] When $n = 5$, our previous results allow us to characterise all extreme points $A$ for which $|\gamma(A)|=1,2,5$.  However, when $|\gamma(A)|=4$, we can find extreme points with loose entries.

The following point $A\in\D$ can be shown to be an extreme point of $\D$ by using Theorem~\ref{th:barvinok}.
$$A=\frac{1}{3+2e^\e}\left(\begin{array}{ccccc} 1 & 1 & 2e^\e & 1 & 0 \\
e^\e & 1 & 2 & e^\e & 0 \\
e^\e & e^\e & 2 & 1 & 0 \\
1 & e^\e & 2 & e^\e & 0 \\
1 & 1 & 1+e^\e & e^\e & 0\end{array}\right).$$
Fitting with Theorem~\ref{th:maxmmin}, $A$ has only a single loose entry ($\lambda(A)=\{(5,3)\}$), while we also observe that $\rank(A)=4$, satisfying Theorem~\ref{th:rank}.

We therefore have $A\in\ext(\D)$, but $A\notin\tilde{\D}'\cup\tilde{\D}$. Hence, $\tilde{\D}'\cup\tilde{\D}\subset\ext(\D)$ in general.
\end{description}

\section{Conclusion}\label{sc:conc}

We have studied the differential privacy polytope of $n\times n$ matrices and described a suite of results characterising its extreme points. In particular, our results describe completely the extreme points of this polytope containing 1, 2 and $n$ non-zero columns.  The last fact is of particular practical significance as most implementations of differentially private mechanisms are likely to have no zero columns; this is because a zero column corresponds to a value of the dataset $D$ that is never released by the mechanism.  Future work could focus on characterising extreme points with other values of $|\gamma(A)|$; alternative directions for work include considering other convex geometric aspects of the polytope $\D$ such as the structure of its dual set for example.  

\section*{Acknowledgment}
The first named author was supported by the Science Foundation Ireland grant SFI/11/PI/1177.

\end{document}